\documentclass[11pt]{article}
\usepackage[a4paper, total={6in, 9in}]{geometry}
\usepackage{amsmath} 
\usepackage{mathtools} %
\usepackage{amssymb} 
\usepackage{graphicx}
\usepackage{enumitem}
\usepackage[font=small,labelfont=bf]{caption}
\usepackage{subcaption}
\usepackage{amsthm}
\usepackage{siunitx}
\usepackage{comment}
\usepackage{mleftright}
\usepackage{authblk}
\newtheoremstyle{break}
  {\topsep}{\topsep}%
  {\itshape}{}%
  {\bfseries}{}%
  {\newline}{}%
\theoremstyle{break}
\newtheorem{theorem}{Theorem}[section]
\newtheorem{corollary}[theorem]{Corollary}
\newtheorem{proposition}[theorem]{Proposition}
\newtheorem{lemma}[theorem]{Lemma}

\usepackage{hyperref}
\hypersetup{colorlinks = true, linkcolor = blue, filecolor = magenta, urlcolor = cyan, citecolor = blue, pdfpagemode = FullScreen}

\usepackage[
    backend=biber,        
    style=numeric,        
    sorting=nyt,          
    maxnames=9,           
    giveninits=true,      
    doi=true,             
    url=false,            
    isbn=false,           
    sortcites=true
]{biblatex}

\DeclareNameAlias{sortname}{family-given} 
\DeclareNameAlias{default}{family-given}

\DeclareFieldFormat{doi}{%
  \ifhyperref
    {\href{https://doi.org/#1}{DOI: \nolinkurl{#1}}}
    {DOI: \nolinkurl{#1}}}

\usepackage{orcidlink}

\DeclareMathOperator{\supp}{supp}
\DeclareMathOperator{\vspan}{span}
\DeclareMathOperator{\dist}{dist}

\DeclareMathOperator\Arg{Arg}

\title{Polynomial curvelets on higher-dimensional spheres}

\author{Frederic Schoppert \orcidlink{0000-0002-8682-3723}\,\thanks{\href{mailto:f.schoppert@uni-luebeck.de}{f.schoppert@uni-luebeck.de}}}
\affil{Institute of Mathematics, University of L{\"u}beck,
Ratzeburger Allee 160, 23562, L{\"u}beck, Germany}

\date{\vspace{-5ex}}

\begin{document}

\maketitle
\begin{abstract}
In this article, we introduce and investigate polynomial curvelets on spheres, which form a class of Parseval frames for $L^2(\mathbb{S}^{d-1})$, $d \geq 3$. The proposed construction offers a directionally sensitive multiscale decomposition and provides a sparse representation of spherical data. As a main result, we derive a sharp pointwise localization bound which shows that the frame elements decay rapidly away from their center of mass, making them a powerful tool for position-based analyses. In contrast to previous constructions, polynomial curvelets are not limited in their directional resolution. Consequently, the frames established in this article are particularly powerful when it comes to the analysis of localized anisotropic features, such as edges. To illustrate this point, we show that, given a suitable test signal that exhibits (higher-order) discontinuities at the boundary $\partial A$ of a spherical cap $A\subset \mathbb{S}^{d-1}$, the corresponding curvelet coefficients peak precisely when the analysis function matches some segment of the boundary $\partial A$, both in terms of position and orientation. Otherwise, the coefficients decay rapidly.
\end{abstract}

\noindent\textbf{Keywords:} Curvelets; Sphere; Directionality; Parseval frame; Localization; Edge detection.


\section{Introduction}
In a recent article \cite{bib77}, we introduced a general class of polynomial frames for $L^2(\mathbb{S}^{d-1})$, $d\geq 3$. This class includes the well-established zonal (isotropic) systems, sometimes called spherical needlets (see e.g.\ \cite{bib11, bib3, bib15, bib16, bib29, bib30, bib14, bib46, bib48, bib49, bib25}), but also paves the way for a variety of new anisotropic (non-zonal) designs. Based on this newly established framework, we introduce a family of highly directional and localized Parseval frames for $L^2(\mathbb{S}^{d-1})$, $d \geq 3$. These frames extend the two-dimensional construction of second-generation curvelets presented in \cite{bib1}, and they provide a tool for position-frequency analyses of spherical data, offering additional directional resolution at each scale and position. Unlike previous constructions, we show that the polynomial curvelets defined in this article are not limited in their directional sensitivity and they can be used to precisely identify localized features, such as edges, not only in terms of their position but also in terms of their orientation. 

On the $2$-sphere, localized directional polynomial frames were previously established in the form of so-called directional wavelets \cite{bib9, bib21, bib10} and second-generation curvelets \cite{bib1}. Due to their excellent spatial localization and directional sensitivity, both systems have found a variety of applications in the analysis of spherical random fields (e.g.\ \cite{bib9, bib70, bib63, bib65, bib67, bib69}), often related to problems in cosmology. Further areas of relevance include sparse reconstruction (e.g.\ \cite{bib66}), image segmentation (e.g.\ \cite{bib68}), as well as the detection of (higher-order) discontinuities in signals \cite{bib26, bib31}. 

The general framework established in \cite{bib77} includes the aforementioned directional wave\-lets and second-generation curvelets as special cases and, moreover, allows us to extend these systems to higher-dimensional spheres. For directional wavelets, an extension to $\mathbb{S}^{d-1}$, $d \geq 3$, was studied in \cite{bib37}, resulting in a class of well-localized Parseval frames for $L^2(\mathbb{S}^{d-1})$, with adjustable (but limited) directional sensitivity. Here, the corresponding frames are generated by an initial sequence of polynomials $\Psi^j  \in \Pi_{2^j}(\mathbb{S}^{d-1})$, $j\in \mathbb{N}_0$, that are highly concentrated at the North Pole $e^d$. Specifically, it was shown that
\begin{equation}\label{localization bound isotr. introd}
    \lvert \Psi^j(\eta)\rvert \leq \frac{c_{q}2^{j(d-1)}}{(1+2^j \dist(\eta, e^d))^{q}}, \qquad \eta \in \mathbb{S}^{d-1},
\end{equation}
for each $q\geq1$ \cite[Theorem~4.1]{bib37}. For corresponding zonal frame functions, often called spherical needlets, this kind of localization bound is well-known (see \cite{bib3, bib15, bib16, bib81}). Moreover, in the special case $d=3$, the estimate \eqref{localization bound isotr. introd} was first proven in \cite{bib31} for directional wavelets on $\mathbb{S}^2$, and prior to that a slightly weaker version was shown in \cite{bib9}.

In this article, we extend the two-dimensional second-generation curvelets introduced in \cite{bib1} to higher-dimensional spheres, by following the general construction in \cite{bib77}. Specifically, for $d \geq 3$, we define an initial sequence $(\Psi_{\scriptscriptstyle \mathrm{C}}^j)_{j=0}^\infty$ of polynomial curvelets satisfying $\Psi_{\scriptscriptstyle \mathrm{C}}^0 \equiv 1$ and $\Psi_{\scriptscriptstyle \mathrm{C}}^j \in \Pi_{2^j}(\mathbb{S}^{d-1})$ for each $j \in \mathbb{N}$. By using suitable rotation matrices $g_{j, r} \in SO(d)$ with corresponding weights $\mu_{j, r}>0$, which arise from positive quadrature rules for spherical polynomials (see the discussion in \autoref{sec3}), we obtain tight (Parseval) frames for $L^2(\mathbb{S}^{d-1})$ that are of the form
\begin{equation*}
   \mathcal{X}[(\Psi_{\scriptscriptstyle \mathrm{C}}^j)_{j=0}^\infty] =\{ \sqrt{\mu_{j, r}}\,\Psi_{\scriptscriptstyle \mathrm{C}}^j(g_{j, r}^{-1}  \cdot ), \; r=1, \dots , r_j, \; j \in \mathbb{N}_0 \}.
\end{equation*}
Thus, each $f \in L^2(\mathbb{S}^{d-1})$ can be expanded into the polynomial curvelets $\Psi_{\scriptscriptstyle \mathrm{C}}^j(g_{j, r}^{-1}  \cdot)$ via 
\begin{equation*}
    f = \sum_{j=0}^\infty \sum_{r=1}^{r_j} \mu_{j, r} \langle f, \Psi_{\scriptscriptstyle \mathrm{C}}^j(g_{j, r}^{-1}  \cdot )\rangle_{\mathbb{S}^{d-1}} \Psi_{\scriptscriptstyle \mathrm{C}}^j(g_{j, r}^{-1}  \cdot ).
\end{equation*}
Indeed, we show that this frame expansion converges fast in $L^p(\mathbb{S}^{d-1})$, $1 \leq p<\infty$, and $C(\mathbb{S}^{d-1})$, provided that $f$ belongs to the corresponding Banach space.

As a main result of the investigations presented in this article, we prove that polynomial curvelets satisfy the direction-dependent localization bound
\begin{equation}\label{localization bound intr}
    \lvert \Psi_{\scriptscriptstyle \mathrm{C}}^j(x)\rvert \leq \frac{c_{q} 2^{j(3d-2)/4}\lvert x_d+\mathrm{i}x_{d-1}\rvert^{2^{j-1}} }{(1+2^j \lvert \Arg(x_d+\mathrm{i}x_{d-1}) \rvert)^q}, \qquad x=(x_1, \dots, x_d) \in \mathbb{S}^{d-1},
\end{equation}
for each $q \in \mathbb{N}$. Here, for $z \in \mathbb{C}\setminus \{0\}$, $\Arg(z)$ denotes the unique element in $(-\pi, \pi]$ such that $z = \lvert z \rvert \mathrm{exp}(\mathrm{i}\Arg(z))$. The upper bound in \eqref{localization bound intr} implies, in particular, that the functions $\Psi_{\scriptscriptstyle \mathrm{C}}^j$ are highly localized at the North Pole $e^d$. For two-dimensional curvelets \cite{bib1}, i.e., in the case $d=3$, this estimate was previously established in \cite[Proposition~2.4]{bib26}.

In contrast to other polynomial frames for $L^2(\mathbb{S}^{d-1})$, the polynomial curvelets constructed in this article are, just like in the two-dimensional setting \cite{bib1, bib26}, not limited in their directional sensitivity. To reinforce this point, we derive an explicit formula for the auto-correlation functions
\begin{equation*}
    SO(d-1) \rightarrow \mathbb{C}, \qquad h \mapsto\langle \Psi_{\scriptscriptstyle \mathrm{C}}^j, \Psi_{\scriptscriptstyle \mathrm{C}}^j(h^{-1} \cdot) \rangle_{\mathbb{S}^{d-1}}, \qquad j \in \mathbb{N}_0,
\end{equation*}
which we observe to be highly concentrated at the identity matrix for large scales $j$. Here, we identify $SO(d-1)$ with the subgroup $\{h \in SO(d) : he^d = e^d\}$ of $SO(d)$. The price paid for the extra directional resolution is an increased computational cost. Specifically, the number of rotations needed at each scale $j$ grows like $2^{j(2d-3)}$, whereas for previously considered polynomial frames it typically scales like $2^{j(d-1)}$.

To conclude this article, we demonstrate the effectiveness of polynomial curvelets when it comes to the detection of edges and higher-order singularities. Here, we consider a class of test signals $f$, which are smooth except for a (higher-order) discontinuity along the boundary $\partial A$ of some spherical cap $A \subset \mathbb{S}^{d-1}$. In the simplest case, $f$ is just the characteristic function of $A$, i.e., $f=\mathbf{1}_A$. We then prove that the frame coefficients $\langle f, \Psi_{\scriptscriptstyle \mathrm{C}}^j(g^{-1} \cdot ) \rangle_{\mathbb{S}^{d-1}}$, $g \in SO(d)$, peak when the rotated analysis function $\Psi_{\scriptscriptstyle \mathrm{C}}^j(g^{-1} \cdot )$ matches some segment of the boundary $\partial A$ in terms of position and orientation. If $\Psi_{\scriptscriptstyle \mathrm{C}}^j(g^{-1} \cdot )$ is located away from $\partial A$, or if the orientation of $\Psi_{\scriptscriptstyle \mathrm{C}}^j(g^{-1} \cdot )$ does not match the orientation of the closest edge segment, the inner products decay rapidly. The latter also underlines the (expected) sparsity of the curvelet representation. We note that in the two-dimensional context, i.e., for $d=3$, the results presented have previously been established in \cite{bib26}.

The remainder of this article is structured as follows. In \autoref{sec2}, we briefly review some fundamentals of harmonic analysis on $\mathbb{S}^{d-1}$, which will be essential for our investigations. The construction of Parseval frames consisting of polynomial curvelets is carried out in \autoref{sec3}. In \autoref{sec4}, we prove that the frame elements satisfy the direction-dependent localization bound in \eqref{localization bound intr}. In \autoref{sec5}, we derive explicit expressions for the auto-correlation functions associated with polynomial curvelets, which point to a particularly strong directional sensitivity. Finally, in \autoref{sec6}, we demonstrate the strength of polynomial curvelet frames when it comes to the problem of edge detection.

\section{Preliminaries}\label{sec2}
Let $d \geq 3$. We consider the Euclidean space $\mathbb{R}^d$ equipped with the usual inner product $\langle x, y \rangle = \sum_{i=1}^d x_i y_i $, as well as the corresponding induced norm $ \| x \|_2 = \sqrt{\langle x, x \rangle}$. The canonical basis vectors of $\mathbb{R}^d$ are given by $e^j=(\delta_{i, j})_{i=1}^d$, $j=1, \dots, d$, where
\begin{equation*}
\delta_{i, j}= \begin{cases}
1, \quad & i=j,\\
0,  & i \neq j.
\end{cases}
\end{equation*}
As usual, the unit sphere in $\mathbb{R}^d$ will be denoted by $\mathbb{S}^{d-1}= \{x \in \mathbb{R}^d \mid \|x\|_2=1 \}$ and we will refer to $e^d$ as the North Pole of $\mathbb{S}^{d-1}$. The standard metric on $\mathbb{S}^{d-1}$ is given by the geodesic distance
\begin{equation*}\label{geodesic distance}
\dist(\eta, \nu)= \arccos\left( \langle \eta, \nu\rangle \right), \qquad \eta, \nu \in \mathbb{S}^{d-1}.
\end{equation*}
The set
\begin{equation*}\label{spherical cap}
C(z, \phi) = \left\{ x\in \mathbb{S}^{d-1} :\dist(x, z)<\phi \right\}   
\end{equation*}
constitutes a spherical cap with center $z \in \mathbb{S}^{d-1}$ and opening angle \(\phi \in \left(0, \pi \right)\). It is sometimes beneficial to parameterize $\mathbb{S}^{d-1}$ in terms of spherical coordinates $\theta_1 \in (-\pi,\pi]$, $\theta_2, \dots, \theta_{d-1}\in [0, \pi]$, via
\begin{equation}\label{spherical coordinates}
\eta(\theta_1, \theta_2, \dots, \theta_{d-1}) = \begin{pmatrix}
\sin \theta_{d-1}\, \dots \, \sin \theta_2 \, \sin \theta_1 \\
\sin \theta_{d-1}\, \dots \, \sin \theta_2 \, \cos \theta_1 \\
\sin \theta_{d-1}\,  \dots\,  \sin \theta_3 \, \cos \theta_2 \\
\vdots \\
\sin \theta_{d-1} \, \cos \theta_{d-2} \\
\cos \theta_{d-1}
\end{pmatrix}.
\end{equation}

In the following, we will review some well-known concepts related to Fourier analysis on $\mathbb{S}^{d-1}$. For more information on this topic, we refer to \cite{bib39, bib3, bib75, bib32}. Let $\omega_{d-1}$ denote the usual rotation-invariant measure on $\mathbb{S}^{d-1}$ which is normalized such that
\begin{equation*}
\int_{\mathbb{S}^{d-1}} \mathrm{d}\omega_{d-1} = 1. 
\end{equation*}
In terms of spherical coordinates \eqref{spherical coordinates},
\begin{equation*}
\mathrm{d}\omega_{d-1} = \frac{\Gamma(\frac{d}{2})}{2 \pi^{d/2}} \sin^{d-2}\theta_{d-1} \, \dots\,\sin \theta_2 \, \mathrm{d}\theta_1 \, \dots\, \mathrm{d}\theta_{d-1}.
\end{equation*}
With respect to this measure, the spaces $L^p(\mathbb{S}^{d-1})$, $0< p < \infty$, are defined by
\begin{equation*}
    L^p(\mathbb{S}^{d-1})= \{ f \colon \mathbb{S}^{d-1} \rightarrow \mathbb{C} \mid f \text{ is measurable and } \|f \|_{L^p(\mathbb{S}^{d-1})} < \infty \},
\end{equation*}
where 
\begin{equation*}
    \|f \|_{L^p(\mathbb{S}^{d-1})} = \left( \int_{\mathbb{S}^{d-1}} \lvert f \rvert^p \, \mathrm{d}\omega_{d-1}\right)^{1/p}.
\end{equation*}
For notational convenience, we will use the symbol $L^\infty(\mathbb{S}^{d-1})$ to refer to the space of continuous functions $f \colon \mathbb{S}^{d-1}\rightarrow \mathbb{C}$, equipped with the supremum norm
\begin{equation*}
    \| f \|_{L^\infty(\mathbb{S}^{d-1})} = \sup_{\eta \in \mathbb{S}^{d-1}} \lvert f(\eta) \rvert.
\end{equation*}

Most investigations conducted in this article rely on Fourier methods which are available in the Hilbert space $L^2(\mathbb{S}^{d-1})$, where the inner product is given by
\begin{equation*}
\langle f_1, f_2 \rangle_{\mathbb{S}^{d-1}} = \int_{\mathbb{S}^{d-1}} f_1  \overline{f_2} \, \mathrm{d}\omega_{d-1}.
\end{equation*}
In particular, we will use a specific orthonormal basis of spherical harmonics (see \cite[p.~466]{bib32}), which has a convenient representation in terms of Gegenbauer polynomials
\begin{equation*}
C_m^\lambda(t) = \frac{(-1)^m \Gamma(\lambda+1/2) \Gamma(m+2\lambda)}{2^m\Gamma(m+\lambda+1/2) m!} (1-t^2)^{1/2-\lambda} \frac{\mathrm{d}^{m}}{\mathrm{d}t^{m}}(1-t^2)^{m+\lambda-1/2}, \quad \lambda >-1/2.
\end{equation*}
W.r.t.\ spherical coordinates \eqref{spherical coordinates}, let
\begin{align}\label{Y_k^n def}
Y_k^{d,n}(\theta_1, \dots, \theta_{d-1})= A_k^n \prod_{j=0}^{d-3} C_{ k_j  - \lvert k_{j+1}\rvert}^{\frac{d-j-2}{2}+\lvert k_{j+1}\rvert}(\cos \theta_{d-j-1}) \, \sin^{\lvert k_{j+1}\rvert}(\theta_{d-j-1}) \, \mathrm{e}^{\mathrm{i}k_{d-2}\theta_1},
\end{align}
where $k_0=n \in \mathbb{N}_0$ and $k=(k_1, \dots, k_{d-2})\in \mathcal{I}_n^d$ with
\begin{equation*}\label{indexset}
\mathcal{I}_n^d = \{ (k_1, \dots, k_{d-2}) \in \mathbb{N}_0^{d-3}\times \mathbb{Z} : n\geq k_1 \geq \dots\geq k_{d-3}\geq \lvert k_{d-2}\rvert \}.
\end{equation*}
The normalization constants $A_k^n >0$ in \eqref{Y_k^n def} are defined by
\begin{align}\label{A_k^n}
 (A_k^n)^2 = \frac{2^{(d-4)(d-2)}}{\Gamma\!\left(\frac{d}{2}\right)} \prod_{j=0}^{d-3}\frac{2^{2\lvert k_{j+1}\rvert-j}(k_j-\lvert k_{j+1}\rvert)!(2k_j+d-j-2)\Gamma^2(\frac{d-j-2}{2}+\lvert k_{j+1}\rvert)}{\sqrt{\pi}\Gamma(k_j+\lvert k_{j+1}\rvert+d-j-2)}.
\end{align}
For $n \in \mathbb{N}_0$ and $k \in \mathcal{I}_n^d$, the function $Y_k^{d, n} \in L^2(\mathbb{S}^{d-1})$ defined above is a spherical harmonic of degree $n$. Moreover, the family $\{ Y_k^{d, n}, \; k \in \mathcal{I}_n^d, \; n \in \mathbb{N}_0 \}$ constitutes an orthonormal basis for $L^2(\mathbb{S}^{d-1})$. The spherical harmonic subspaces $\mathcal{H}_n^d = \vspan \{Y_k^{d, n}, \; k \in \mathcal{I}_n^d \}$ are finite-dimensional with
\begin{equation}\label{dimensions subspaces}
    \dim \mathcal{H}_{n}^d = \frac{(2n+d-2) (n+d-3)!}{(d-2)! n!}.
\end{equation}
By
\begin{equation*}
    \Pi_N(\mathbb{S}^{d-1}) = \bigoplus_{n=0}^N \mathcal{H}_n^d
\end{equation*}
we denote the set of spherical polynomials of degree $N \in \mathbb{N}_0$. With the above notation, every signal $f \in L^2(\mathbb{S}^{d-1})$ can be represented in terms of its Fourier series
\begin{equation*}
    f= \sum_{n=0}^\infty\sum_{k \in \mathcal{I}_n^d} f(n, k) Y_k^{d, n},
\end{equation*}
with the corresponding Fourier coefficients $f(n, k) = \langle f, Y_k^{d, n}\rangle_{\mathbb{S}^{d-1}}$.

Spherical harmonics satisfy the well-known addition theorem  
\begin{equation}\label{addition thm}
\sum_{k \in \mathcal{I}_n^d}  \overline{Y_k^{d, n}(\nu)}  Y_k^{d, n}(\eta) = \frac{2n+d-2}{d-2} C_n^{\frac{d-2}{2}}(\langle \nu, \eta \rangle), \quad \nu, \eta \in \mathbb{S}^{d-1},
\end{equation}
which reveals a connection between harmonic analysis on the sphere $\mathbb{S}^{d-1}$ and on the interval $[-1,1]$. Indeed, for each fixed $\lambda$, the family $\{C_n^\lambda, \; n\in \mathbb{N}_0\}$ forms a complete orthogonal system for the Hilbert space $L^2([-1,1], w_\lambda)$ w.r.t.\ the weighted inner product
\begin{equation*}
    \langle f_1, f_2 \rangle_{[-1,1], w_\lambda} = \int_{-1}^1 f_1(t) \overline{f_2(t)}  w_\lambda(t) \,\mathrm{d}t, \qquad f_1, f_2 \in L^2([-1,1], w_\lambda),
\end{equation*}
where $w_\lambda(t) = (1-t^2)^{\lambda-1/2}$. Here, the Gegenbauer polynomials are normalized such that
\begin{equation*}
\int_{-1}^1 \lvert C_n^\lambda (t) \rvert^2 \, w_\lambda(t) \,  \mathrm{d}t = \frac{\pi \Gamma(n+2\lambda)}{2^{2\lambda-1}n! (n+\lambda) \Gamma^2(\lambda)}.
\end{equation*}
Thus, each $f \in L^2([-1,1], w_\lambda)$ has an expansion of the form
\begin{equation}\label{gegenbauer expansion}
f = \sum_{n=0}^\infty b_n^\lambda(f) C_n^\lambda,
\end{equation}
where
\begin{equation}\label{gegenbauer coeff}
b_n^\lambda(f) = \frac{2^{2\lambda-1}n! (n+\lambda) \Gamma^2(\lambda)}{\pi \Gamma(n+2\lambda)} \int_{-1}^1 f(t)  C_n^\lambda(t) w_\lambda(t) \, \mathrm{d}t.
\end{equation}

Let $SO(d) = \{g \in \mathbb{R}^{d\times d} : g^\top = g^{-1}, \; \det g =1 \}$ be the special orthogonal group. For $g \in SO(d)$, we consider the rotation operator $T^d(g) \colon L^2(\mathbb{S}^{d-1}) \rightarrow L^2(\mathbb{S}^{d-1})$ given by
\begin{equation*}
    T^d(g)f = f(g^{-1} \cdot).
\end{equation*}
In fact, $g \mapsto T^d(g)$ defines a unitary group representation of $SO(d)$ on $L^2(\mathbb{S}^{d-1})$, which is irreducible on each spherical harmonic subspace $\mathcal{H}_n^d$. However, apart from some calculations in \autoref{sec5}, we will not refer to this property explicitly, although it plays an essential role in the background of this article. Indeed, the tools provided by the theory of harmonic analysis associated with the representation $T^d$ constitute the foundation for several central results in \cite{bib77}, on which we will base our constructions. The material required for our investigations in \autoref{sec5} is summarized in the \hyperref[appendix]{Appendix} of this article. For further details on the topic of harmonic analysis on $SO(d)$ associated with the rotation operators $T^d(g)$, $g \in SO(d)$, we refer to \cite{bib32}, as well as to our recent discussions in \cite{bib37, bib77}.

In the following, for $m \in \{2, 3, \dots, d-1 \}$, we will not distinguish between $SO(m)$ and the subgroup $\{ h \in SO(d) : he^j= e^j \text{ for } j=m+1, \dots, d \} \subset SO(d)$. In particular, $SO(d-1)$ is identified with the subgroup of all elements in $SO(d)$ that keep the North Pole $e^d$ fixed. For $\nu \in \mathbb{S}^{d-1}$, the symbol $g_\nu$ will always denote an arbitrary rotation matrix in $SO(d)$ such that $g_\nu e^d = \nu$. Likewise, if $\nu' \in \mathbb{S}^{d-2}$, we write $h_{\nu'}$ to refer to an arbitrary rotation matrix in $SO(d-1)$ satisfying $h_{\nu'} e^{d-1} = (\nu', 0)\in \mathbb{S}^{d-1}$.

Throughout this article, all constants $c, c_0, c_1, \dots$ depend only on the dimension $d$ and on the indicated parameters. Furthermore, their exact values might change with each appearance. For two non-negative sequences $(a_j)_{j=0}^\infty$ and $(b_j)_{j=0}^\infty$, we write $a_j\sim b_j$ to indicate that there exist constants $0<c_1\leq c_2$ such that
\begin{equation*}
    c_1 a_j \leq b_j \leq c_2 a_j, \qquad \text{for each } j \in \mathbb{N}_0.
\end{equation*}

\section{Construction of polynomial curvelets}\label{sec3}
The construction of polynomial frames presented in this chapter is motivated by the article \cite{bib1}, in which the authors defined highly directional analysis functions, so-called second-generation curvelets, on the $2$-sphere. Specifically, utilizing the general framework recently developed in \cite{bib77}, we extend the systems considered in \cite{bib1} to higher-dimensional spheres. We will refer to our frame elements as polynomial curvelets.

Let $\phi \in C^q([0, \infty))$, $q \in \mathbb{N}$, be a real-valued non-increasing function such that $\phi(t)=1$ for $t\in [0, 1/2]$ and $\phi(t)=0$ for $t\geq 1$. Then
\begin{equation*}\label{kappa def}
\kappa(t)=\sqrt{\phi^2(t/2) - \phi^2(t)}, \qquad 0 \leq t < \infty,
\end{equation*}
defines a window function $\kappa \in C^q([0, \infty))$ with $\supp(\kappa)\subset [1/2, 2]$. In particular, it follows that
\begin{equation}\label{admissibility kappa}
    \sum_{j=1}^\infty \left\lvert\kappa\!\left( \frac{n}{2^{j-1}}\right)\right\rvert^2 = 1, \quad \text{for each } n \in \mathbb{N}.
\end{equation}
Additionally, we consider directionality components of the form
\begin{equation*}
\zeta_k^{d, n} = \frac{1}{\sqrt{2}}\begin{cases}
1, \quad &\text{if } \lvert k_{d-2} \rvert = n,\\
0, & \text{else},
\end{cases} \qquad \qquad k=(k_1, \dots . k_{d-2}) \in \mathcal{I}_n^d, \quad n \in \mathbb{N}.
\end{equation*}
Clearly,
\begin{equation}\label{admissibility zeta curvelets}
\sum_{k\in  \mathcal{I}_{n}^d} \lvert \zeta_{k}^{d,n}\rvert^2 = 1, \quad \text{for each }n \in \mathbb{N}.
\end{equation}
With the above defined window function $\kappa$ and the directionality components $\zeta_k^{d, n}$, we now define an initial sequence $(\Psi^j)_{j=0}^{\infty}$ of polynomial curvelets via
\begin{equation}\label{curvelets def1}
    \Psi^0 \equiv 1, \qquad \Psi^{j} = \sum_{n=0}^{\infty}\sum_{k\in  \mathcal{I}_{n}^d} \sqrt{\dim \mathcal{H}_n^d} \, \kappa\!\left(\frac{n}{2^{j-1}} \right) \zeta_{ k}^{d,n} \,  Y_k^{d,n}, \quad j \in \mathbb{N}.
\end{equation}
We note that in the two-dimensional setting, i.e., for $d=3$, this construction yields (essentially) the second-generation curvelets from \cite{bib1}.

By definition of the spherical harmonics given in \eqref{Y_k^n def}, we have
\begin{equation*}
Y_{(n, \dots,n, \pm n)}^{d, n}(x) = A_{(n, \dots, n)}^n (x_2 \pm \mathrm{i}x_1)^n, \qquad \text{for } x = (x_1, x_2, \dots, x_d) \in \mathbb{S}^{d-1}.
\end{equation*}
Thus, the polynomial curvelets defined above can be written as
\begin{align*}
\Psi^{j}(x)  &= \sqrt{2} \sum_{n=0}^{\infty}\sqrt{\dim \mathcal{H}_n^d}  \, \kappa\!\left(\frac{n}{2^{j-1}} \right) A_{(n, \dots, n)}^n \Re\{(x_2+\mathrm{i}x_1)^n\}\\
&=\sqrt{2}\sum_{n=0}^{\infty} \sqrt{\dim \mathcal{H}_n^d}  \, \kappa\!\left(\frac{n}{2^{j-1}} \right) A_{(n, \dots, n)}^n \lvert x_2+\mathrm{i}x_1\rvert^n \cos (n\Arg(x_2+\mathrm{i}x_1)), \qquad j \in \mathbb{N}.
\end{align*}
Here, $\Re\{z\}$ and $\Arg(z)$ denote the real part and the principal argument of $z \in \mathbb{C}$, respectively. To be precise, if $z\neq 0$ then $\Arg(z)$ is the unique element in $(-\pi, \pi]$ such that $z = \lvert z \rvert \exp(\mathrm{i}\Arg(z))$. From this explicit representation, it is obvious that, at large scales $j$, the functions $\Psi^{j}$ are well localized at $e^2$. However, as mentioned in the two-dimensional setting \cite{bib1}, one might prefer to work with an initial sequence where each element is concentrated at the North Pole $e^d$. Hence, for an arbitrary rotation matrix $g_0 \in SO(d)$ with $g_0 e^1 = e^{d-1}$ and $g_0 e^2 = e^d$, we consider the rotated sequence $(\Psi_{\scriptscriptstyle \mathrm{C}}^j)_{j=0}^\infty \subset \Pi(\mathbb{S}^{d-1})$, where
\begin{equation}\label{curvelets def}
\Psi_{\scriptscriptstyle \mathrm{C}}^j = T^d(g_0)\Psi^j, \qquad j \in \mathbb{N}_0.
\end{equation}
Clearly, the polynomial curvelets $\Psi_{\scriptscriptstyle \mathrm{C}}^j$ satisfy
\begin{align}\label{curvelets explicit}
\Psi_{\scriptscriptstyle \mathrm{C}}^j(x)  &= \sqrt{2} \sum_{n=0}^{\infty}\sqrt{\dim \mathcal{H}_n^d}  \, \kappa\!\left(\frac{n}{2^{j-1}} \right) A_{(n, \dots, n)}^n \Re\{(x_d+\mathrm{i}x_{d-1})^n\}\nonumber\\
&=\sqrt{2}\sum_{n=0}^{\infty} \sqrt{\dim \mathcal{H}_n^d}  \, \kappa\!\left(\frac{n}{2^{j-1}} \right) A_{(n, \dots, n)}^n \lvert x_d+\mathrm{i}x_{d-1}\rvert^n \cos (n\Arg(x_{d}+\mathrm{i}x_{d-1})),
\end{align}
which implies that each $\Psi_{\scriptscriptstyle \mathrm{C}}^j$ is concentrated at $e^d$, where the localization increases with $j$. Moreover, \eqref{curvelets explicit} shows that
\begin{equation*}
    T^d(h)\Psi_{\scriptscriptstyle \mathrm{C}}^j = \Psi_{\scriptscriptstyle \mathrm{C}}^j,  \quad \text{for each } h \in SO(d-2),
\end{equation*}
as $\Psi_{\scriptscriptstyle \mathrm{C}}^j(x_1, x_2, \dots, x_d)$ only depends on $x_{d-1}$ and $x_d$. In the terminology of \cite{bib77}, the sequence $(\Psi_{\scriptscriptstyle \mathrm{C}}^j)_{j=0}^\infty$ is $SO(d-2)$-invariant. Hence, as discussed in \cite{bib77}, we can illustrate our polynomial curvelets in terms of the functions
\begin{equation}\label{illustrate curvelets}
\psi_{\scriptscriptstyle \mathrm{C}}^j(t, \varphi) =\Psi_{\scriptscriptstyle \mathrm{C}}^j(\cos t \, e^d + \sin t (\cos \varphi \, e^{d-1} + \sin \varphi \, (0, 0, \eta))), \quad (t, \varphi) \in [0, \pi]\times[0, 2\pi),
\end{equation}
which are independent of $\eta \in \mathbb{S}^{d-3}$. For $\varphi \in [0, 2\pi)$, the map $t \mapsto \psi_{\scriptscriptstyle \mathrm{C}}^j(t, \varphi)$ represents the function $\Psi_{\scriptscriptstyle \mathrm{C}}^j$ along the geodesic (unit-speed) curve $v\colon [0, \pi] \mapsto \mathbb{S}^{d-1}$ with $v(0)=e^d$, $v(\pi)=-e^d$, and $v'(0)= \cos \varphi e^{d-1} + \sin \varphi (0, 0, \eta)$. In particular, since the expression on the right-hand side of \eqref{illustrate curvelets} does not depend on $\eta$, polynomial curvelets $\Psi_{\scriptscriptstyle \mathrm{C}}^j$ exhibit the same behavior along each of those paths $t \mapsto v(t)$, as long as $\varphi$ is fixed. Consequently, we can replace $(0, 0, \eta)$ with $e^{d-2}$ in \eqref{illustrate curvelets}.

\begin{figure}
\centering
\includegraphics[width=\textwidth]{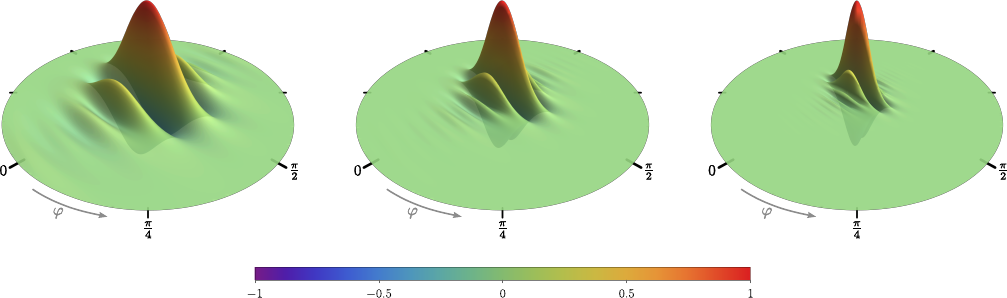}
\caption{Re-scaled polynomial curvelets $\psi_{\scriptscriptstyle \mathrm{C}}^{j} (t, \varphi)$, $(t, \varphi) \in [0, 1]\times[0, 2\pi)$, for $j=5, 6, 7$ from left to right}\label{fig4}
\end{figure}

In \autoref{fig4} the functions $\psi_{\scriptscriptstyle \mathrm{C}}^j$ are visualized for $d=4$, i.e., in the case where $\Psi_{\scriptscriptstyle \mathrm{C}}^j \in \Pi_{2^j}(\mathbb{S}^3)$, where we chose $\kappa$ to be the $C^\infty$ window function with $\supp(\kappa)= [1/2, 2]$ defined in \cite{bib9}. Here, for the sake of clarity, each of the images has been re-scaled to only take values in $[-1, 1]$. In addition to being well localized at the North Pole $e^d$, we observe that the curvelets $\Psi_{\scriptscriptstyle \mathrm{C}}^j$ decay fastest along the directions $\pm e^{d-1}$, which correspond to $\varphi \in \{0, \pi \}$ in \autoref{fig4}, and slowest in the directions $(0,0 , \eta'')$, $\eta'' \in \mathbb{S}^{d-3}$, which correspond to $\varphi \in \{\pi/2, 3\pi/2 \}$.

Following the general approach from \cite{bib77}, the sequence $(\Psi_{\scriptscriptstyle \mathrm{C}}^j)_{j=0}^\infty\subset \Pi(\mathbb{S}^{d-1})$ can be used to generate frames for $L^2(\mathbb{S}^{d-1})$ as follows. For each $j \in \mathbb{N}$, we require positive quadrature formulas of the form
\begin{equation}\label{quadrature S^(d-1)}
    \int_{\mathbb{S}^{d-1}} f \, \mathrm{d}\omega_{d-1} = \sum_{r=1}^{r_j} \omega_{j, r} f(\eta_{j, r}), \qquad \text{if } f \in \Pi_{2^{j+1}}(\mathbb{S}^{d-1}),
\end{equation}
and
\begin{equation}\label{quadrature S^(d-2)}
    \int_{\mathbb{S}^{d-2}} f \, \mathrm{d}\omega_{d-2} = \sum_{s=1}^{s_j} \omega_{j, s}' f(\eta_{j, s}'), \qquad \text{if } f \in \Pi_{2^{j+1}}(\mathbb{S}^{d-2}),
\end{equation}
with points $\eta_{j, r} \in \mathbb{S}^{d-1}$, $\eta_{j, s}' \in \mathbb{S}^{d-2}$ and corresponding weights $\omega_{j, r}, \omega_{j, s}'>0$. For details on spherical quadrature rules and their construction, see \cite{bib34, bib15, bib72, bib73, bib74, bib71} and the references therein. In particular, we can assume (see e.g.\ \cite{bib34}) that the numbers of samples in \eqref{quadrature S^(d-1)} and \eqref{quadrature S^(d-2)} satisfy
\begin{equation}\label{r_ s_j scaling}
    r_j \sim 2^{j(d-1)}, \qquad s_j \sim2^{j(d-2)}.
\end{equation}
As suggested in \cite[Section~4.5.3]{bib77} for the setting of $SO(d-2)$-invariant sequences, we consider the system
\begin{equation}\label{our frame}
    \mathcal{X}[(\Psi_{\scriptscriptstyle \mathrm{C}}^j)_{j=0}^\infty] = \left\{\sqrt{\omega_{j, r} \omega_{j, s}'}\,  T^d(g_{\eta_{j, r}} h_{\eta_{j, s}'})\Psi_{\scriptscriptstyle \mathrm{C}}^j, \; r=1, \dots, r_j,\; s=1, \dots, s_j, \; j \in \mathbb{N}_0 \right\},
\end{equation}
where $r_0 = s_0 =1$, and where $\eta_{0, 1}\in \mathbb{S}^{d-1}$, $\eta_{0, 1}'\in \mathbb{S}^{d-2}$ can be chosen arbitrarily. Here, using the shorthand notation
\begin{equation*}
    \Psi_{\scriptscriptstyle \mathrm{C}}^{j, r, s} = \sqrt{\omega_{j, r} \omega_{j, s}'}\,  T^d(g_{\eta_{j, r}} h_{\eta_{j, s}'})\Psi_{\scriptscriptstyle \mathrm{C}}^j, \quad r=1, \dots, r_j,\; s=1, \dots, s_j, \; j \in \mathbb{N}_0,
\end{equation*}
we identify $\mathcal{X}[(\Psi_{\scriptscriptstyle \mathrm{C}}^j)_{j=0}^\infty]$ with the (canonically) ordered sequence
\begin{equation*}
    ( \Psi_{\scriptscriptstyle \mathrm{C}}^{0, 1, 1}, \Psi_{\scriptscriptstyle \mathrm{C}}^{1, 1, 1}, \dots , \Psi_{\scriptscriptstyle \mathrm{C}}^{1, 1, s_1}, \Psi_{\scriptscriptstyle \mathrm{C}}^{1, 2, 1} \dots, \Psi_{\scriptscriptstyle \mathrm{C}}^{1, 2, s_1}, \dots \dots, \Psi_{\scriptscriptstyle \mathrm{C}}^{1, r_1, s_1}, \Psi_{\scriptscriptstyle \mathrm{C}}^{2, 1, 1}, \dots \dots ).
\end{equation*}
We note that, according to \eqref{r_ s_j scaling}, the number of rotations at each scale $j$ in \eqref{our frame} grows like $2^{j(2d-3)}$. In comparison, for previous constructions such as spherical needlets and directional polynomial wavelets, this number typically scales like $2^{j(d-1)}$, which can be regarded as a manifestation of their limited directional sensitivity (see \cite{bib77} for more details).
\begin{proposition}\label{proposition curvelets tight frame}
The system $\mathcal{X}[(\Psi_{\scriptscriptstyle \mathrm{C}}^j)_{j=0}^\infty]$ is a Parseval frame for $L^2(\mathbb{S}^{d-1})$. In particular,
\begin{equation*}
    f = \sum_{j=0}^\infty \sum_{r=1}^{r_j} \sum_{s=1}^{s_j}  \langle f,  \Psi_{\scriptscriptstyle \mathrm{C}}^{j, r, s} \rangle_{\mathbb{S}^{d-1}}  \Psi_{\scriptscriptstyle \mathrm{C}}^{j, r, s}
\end{equation*}
converges unconditionally in $L^2(\mathbb{S}^{d-1})$, for each $f \in L^2(\mathbb{S}^{d-1})$.
\end{proposition}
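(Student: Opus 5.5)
The plan is to verify directly that $\sum_{j,r,s}\lvert\langle f,\Psi_{\scriptscriptstyle \mathrm{C}}^{j,r,s}\rangle_{\mathbb{S}^{d-1}}\rvert^2=\|f\|_{L^2(\mathbb{S}^{d-1})}^2$ for all $f$. The reconstruction formula and its unconditional convergence then follow from standard frame theory. Alternatively, one can check the admissibility conditions of the general theorem in \cite{bib77} for $SO(d-2)$-invariant sequences (Section~4.5.3 there). These reduce to \eqref{admissibility kappa} and \eqref{admissibility zeta curvelets}, together with the exactness of the quadratures \eqref{quadrature S^(d-1)} and \eqref{quadrature S^(d-2)} at degree $2^{j+1}$. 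I will sketch the direct argument, since it shows why these conditions suffice.

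\textbf{Step 1: reduce to a single scale $j$.} Write $f=\sum_n f_n$ with $f_n\in\mathcal{H}_n^d$. The function $\Psi_{\scriptscriptstyle \mathrm{C}}^j$ lives in $\bigoplus_{2^{j-2}<n<2^j}\mathcal{H}_n^d$, and rotations preserve each $\mathcal{H}_n^d$. Hence
\begin{equation*}
\Bigl\lvert\bigl\langle f,T^d(g)\Psi_{\scriptscriptstyle \mathrm{C}}^j\bigr\rangle_{\mathbb{S}^{d-1}}\Bigr\rvert^2
\end{equation*}
is, as a function of $g\in SO(d)$, a finite sum of products of matrix coefficients of degree $n,n'<2^j$. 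It therefore lies in the span of Wigner-type functions of degree at most $2^{j+1}$ on $SO(d)$.

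\textbf{Step 2: replace the discrete sum by an integral.} The key claim is that the product rule built from \eqref{quadrature S^(d-1)} and \eqref{quadrature S^(d-2)}, namely $g=g_{\eta_{j,r}}h_{\eta'_{j,s}}$ with weights $\omega_{j,r}\omega'_{j,s}$, integrates exactly over $SO(d)/SO(d-2)$ every $SO(d-2)$-right-invariant function of degree $\le 2^{j+1}$. Right-invariance holds here because $T^d(h)\Psi_{\scriptscriptstyle \mathrm{C}}^j=\Psi_{\scriptscriptstyle \mathrm{C}}^j$ for $h\in SO(d-2)$.

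I would justify the claim in two stages. First, integrate over the fibre variable: for fixed $\eta$, the map $h\mapsto F(g_\eta h)$ on $SO(d-1)/SO(d-2)\cong\mathbb{S}^{d-2}$ is a spherical polynomial of degree $\le 2^{j+1}$, so \eqref{quadrature S^(d-2)} integrates it exactly. Second, average over $SO(d-1)$: this produces an $SO(d-1)$-right-invariant function of $\eta=ge^d$, which is a polynomial on $\mathbb{S}^{d-1}$ of degree $\le 2^{j+1}$, and \eqref{quadrature S^(d-1)} applies.

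The difficulty in the first stage is that the fibre sum uses the fixed choice $g_{\eta_{j,r}}$. Exactness must therefore hold for each $r$ separately, and it does, because the $\mathbb{S}^{d-2}$-quadrature is exact for every polynomial of the stated degree, including the rotated ones. Thus the sum at scale $j$ equals
\begin{equation*}
\int_{SO(d)}\Bigl\lvert\bigl\langle f,T^d(g)\Psi_{\scriptscriptstyle \mathrm{C}}^j\bigr\rangle_{\mathbb{S}^{d-1}}\Bigr\rvert^2\,\mathrm{d}g .
\end{equation*}

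\textbf{Step 3: evaluate the integral by Schur orthogonality.} Irreducibility of $T^d$ on $\mathcal{H}_n^d$ kills the cross terms $n\neq n'$. Each diagonal term gives
\begin{equation*}
\frac{\|f_n\|^2\,\|(\Psi_{\scriptscriptstyle \mathrm{C}}^j)_n\|^2}{\dim\mathcal{H}_n^d}
=\|f_n\|^2\,\kappa^2\!\left(\frac{n}{2^{j-1}}\right)\sum_{k\in\mathcal{I}_n^d}\lvert\zeta_k^{d,n}\rvert^2
=\|f_n\|^2\,\kappa^2\!\left(\frac{n}{2^{j-1}}\right),
\end{equation*}
using \eqref{admissibility zeta curvelets}, and the fact that $T^d(g_0)$ preserves norms within $\mathcal{H}_n^d$. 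Summing over $j\ge1$ with \eqref{admissibility kappa}, and adding the $j=0$ term $\lvert\langle f,1\rangle_{\mathbb{S}^{d-1}}\rvert^2=\|f_0\|^2$, yields $\|f\|^2$.

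The main obstacle is Step 2: the careful exactness argument for the product quadrature on $SO(d)/SO(d-2)$, including the degree bookkeeping. This is exactly the content of \cite[Section~4.5.3]{bib77}, so in the written proof I would cite it once the hypotheses above are checked.
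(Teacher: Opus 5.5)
Your proposal is correct and is essentially the paper's argument. The paper invokes \cite[Proposition~3.1]{bib77} for exactly the quadrature-exactness reduction you sketch in Steps~1--2, which leaves the per-degree identity \eqref{another admissibility}, and then verifies that identity just as in your Step~3: it uses that $\sum_{k}\lvert \Psi_{\scriptscriptstyle \mathrm{C}}^{j}(n,k)\rvert^2$ is unchanged by the rotation $T^d(g_0)$, followed by \eqref{admissibility kappa} and \eqref{admissibility zeta curvelets}. The only difference is that you unpack the cited result via the fibrewise product quadrature on $SO(d)/SO(d-2)$ and Schur orthogonality, rather than treating it as a black box.
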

\begin{proof}
According to \cite[Proposition~3.1]{bib77} it suffices to show that
\begin{equation}\label{another admissibility}
   \delta_{n, 0} + (\dim \mathcal{H}_n^d)^{-1}\sum_{j=1}^\infty \sum_{k \in \mathcal{I}_n^d} \lvert  \Psi_{\scriptscriptstyle \mathrm{C}}^{j}(n, k)\rvert^2 = 1, \qquad \text{for each } n \in \mathbb{N}_0.
\end{equation}
By construction, we have $\Psi_{\scriptscriptstyle \mathrm{C}}^{j} =T^d(g_0)\Psi^j$, $j \in \mathbb{N}_0$, where $\Psi^j$ is defined by \eqref{curvelets def1}. Therefore, $\Psi^{j}(n, k)$ and $\Psi_{\scriptscriptstyle \mathrm{C}}^{j}(n, k) = \langle \Psi^j, T^d(g_0^{-1})Y_k^{d, n}\rangle_{\mathbb{S}^{d-1}}$ both represent Fourier coefficients of $\Psi^j$ w.r.t.\ the orthonormal bases $\{ Y_k^{d, n}, \; k \in \mathcal{I}_n^d\}$ and $\{ T^d(g_0^{-1})Y_k^{d, n}, \; k \in \mathcal{I}_n^d\}$ of $\mathcal{H}_n^d$. Consequently,
\begin{equation*}
    \sum_{k \in \mathcal{I}_n^d}\lvert \Psi_{\scriptscriptstyle \mathrm{C}}^{j}(n, k)\rvert^2 = \sum_{k \in \mathcal{I}_n^d}\lvert \Psi^{j}(n, k)\rvert^2
\end{equation*}
and thus \eqref{another admissibility} reduces to
\begin{equation*}
   \delta_{n, 0} + \sum_{j=1}^\infty \left\lvert \kappa\!\left(\frac{n}{2^{j-1}} \right) \right\rvert^2 \sum_{k \in \mathcal{I}_n^d} \lvert \zeta_{k}^{d, n}\rvert^2 = 1, \qquad \text{for each } n \in \mathbb{N}_0.
\end{equation*}
The assertion now follows easily from \eqref{admissibility kappa} and \eqref{admissibility zeta curvelets}.
\end{proof}

To conclude this section, we note that the curvelet expansion converges fast in the spaces $L^p(\mathbb{S}^{d-1})$, $1 \leq p \leq \infty$, where we recall that, by our convention, the symbol $L^\infty (\mathbb{S}^{d-1})$ refers to the Banach space of continuous functions on $\mathbb{S}^{d-1}$. For each $J \in \mathbb{N}_0$, let $\Lambda_J \colon L^1(\mathbb{S}^{d-1}) \rightarrow \Pi_{2^{J}}(\mathbb{S}^{d-1})$ be the linear operator defined by
\begin{equation*}
    \Lambda_J f =  \sum_{j=0}^J \sum_{r=1}^{r_j} \sum_{s=1}^{s_j}  \langle f,  \Psi_{\scriptscriptstyle \mathrm{C}}^{j, r, s} \rangle_{\mathbb{S}^{d-1}}  \Psi_{\scriptscriptstyle \mathrm{C}}^{j, r, s}.
\end{equation*}
Also, let
\begin{equation*}
E_m(f)_p = \inf_{P \in \Pi_m(\mathbb{S}^{d-1})} \| f-P \|_{L^p(\mathbb{S}^{d-1})}, \qquad m \in \mathbb{N}_0, \quad 1\leq p \leq \infty,
\end{equation*}
denote the error of the best approximation of $f \in L^p(\mathbb{S}^{d-1})$ in $\Pi_{m}(\mathbb{S}^{d-1})$ w.r.t.\ $\| \cdot \|_{L^p(\mathbb{S}^{d-1})}$. Then we have the following result, where we assume, for the sake of simplicity, that the function $\phi$ utilized in the beginning of the section is infinitely often differentiable, i.e., $\phi \in C^\infty([0, \infty))$.
\begin{proposition}
Let $1 \leq p \leq \infty$. Then
    \begin{equation*}
        \| f - \Lambda_Jf\|_{L^p(\mathbb{S}^{d-1})} \leq c_p E_{2^{J-1}}(f)_p, \qquad \text{for each } f \in L^p(\mathbb{S}^{d-1}).
    \end{equation*}
\end{proposition}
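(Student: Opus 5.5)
Since quadrature is exact for polynomials of degree $2^{j+1}$, I first identify $\Lambda_J$ as a zonal multiplier operator. The idea is to write $\Lambda_J$ as convolution with a zonal kernel whose Fourier multiplier equals $1$ on low degrees, and then apply the standard de la Vallée Poussin argument.

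\textbf{Step 1: $\Lambda_J$ is a multiplier.} By the same computation underlying \cite[Proposition~3.1]{bib77} (used in the proof of \autoref{proposition curvelets tight frame}), the positive quadrature rules \eqref{quadrature S^(d-1)} and \eqref{quadrature S^(d-2)} replace the sum over $r,s$ by the integral over $SO(d)$ of $\langle f, T^d(g)\Psi_{\scriptscriptstyle \mathrm{C}}^j\rangle T^d(g)\Psi_{\scriptscriptstyle \mathrm{C}}^j$. This is legitimate because the integrand is a polynomial of degree at most $2^{j+1}$, and $SO(d-2)$-invariance lets the integral over $SO(d)$ reduce to $\mathbb{S}^{d-1}\times\mathbb{S}^{d-2}$. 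Schur orthogonality on each $\mathcal{H}_n^d$ then shows that scale $j$ acts on $\mathcal{H}_n^d$ as multiplication by $(\dim\mathcal{H}_n^d)^{-1}\sum_k|\Psi_{\scriptscriptstyle \mathrm{C}}^j(n,k)|^2=|\kappa(n/2^{j-1})|^2$. Hence
\begin{equation*}
\Lambda_J f=\sum_{n=0}^{\infty} a_J(n)\, \mathrm{proj}_{\mathcal{H}_n^d} f,\qquad a_J(n)=\delta_{n,0}+\sum_{j=1}^J\left|\kappa\!\left(\tfrac{n}{2^{j-1}}\right)\right|^2=\phi^2\!\left(\tfrac{n}{2^{J}}\right),
\end{equation*}
where the last identity follows from the telescoping definition of $\kappa$. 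This formula extends from $L^2$ to $L^1$ because $\Lambda_J$ has a polynomial kernel.

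\textbf{Step 2: uniform boundedness.} The operator $\Lambda_J$ is convolution with the zonal kernel
\begin{equation*}
K_J(t)=\sum_n \phi^2(n/2^J)\,\frac{2n+d-2}{d-2}\,C_n^{(d-2)/2}(t).
\end{equation*}
Since $\phi^2\in C^\infty$ is constant near $0$ and supported in $[0,1]$, the standard needlet-type localization estimate (\cite{bib3, bib15, bib16, bib81}) gives
\begin{equation*}
|K_J(\langle\eta,\nu\rangle)|\le c\,2^{J(d-1)}\bigl(1+2^J\dist(\eta,\nu)\bigr)^{-d}.
\end{equation*}
Consequently $\sup_J\|K_J\|_{L^1}\le c$, and Young's inequality yields $\|\Lambda_J\|_{L^p\to L^p}\le c$ for all $1\le p\le\infty$. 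Continuity is preserved when $p=\infty$.

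\textbf{Step 3: conclusion.} Because $\phi^2(n/2^J)=1$ for $n\le 2^{J-1}$, the operator $\Lambda_J$ reproduces $\Pi_{2^{J-1}}(\mathbb{S}^{d-1})$. For any $P\in\Pi_{2^{J-1}}$,
\begin{equation*}
\|f-\Lambda_Jf\|_p=\|(f-P)-\Lambda_J(f-P)\|_p\le(1+c)\|f-P\|_p .
\end{equation*}
Taking the infimum over $P$ gives the claim; the case $J=0$ is trivial.

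The main obstacle is Step 1: carefully checking that quadrature exactness of degree $2^{j+1}$ suffices for the product of two degree-$2^j$ terms, and checking the reduction from $SO(d)$ to the two spheres. After that, Step 2 is a quotation of the known kernel estimate.
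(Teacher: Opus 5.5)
Your proposal is correct and follows essentially the same route as the paper: identify $\Lambda_J$ (via the quadrature/Schur-orthogonality computation behind the tight-frame property) as the zonal multiplier $\phi^2(n/2^J)$, use uniform $L^p$-boundedness of this de la Vall\'ee Poussin-type convolution, and conclude from the reproduction of $\Pi_{2^{J-1}}(\mathbb{S}^{d-1})$ and the triangle inequality. The only difference is that the paper cites \cite[Theorem~2.6.3]{bib3} for the uniform boundedness, whereas you derive it yourself from the needlet-type kernel localization estimate and Young's inequality.
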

\begin{proof}
    Exactly as in the proof of \cite[Lemma~5.1]{bib37}, a simple calculation shows that
    \begin{equation*}
        \Lambda_J f = \sum_{n=0}^\infty \sum_{k \in \mathcal{I}_n^d}   \phi^2\!\left( \frac{n}{2^J}\right)  \langle f, Y_k^{d, n} \rangle_{\mathbb{S}^{d-1}} Y_k^{d, n}.
    \end{equation*}
    In other words, $\Lambda_J f = f \ast\Phi^J$, where 
    \begin{equation*}
        f \ast \Phi^J (\eta) = \langle f, T^d(g_\eta) \Phi^J\rangle_{\mathbb{S}^{d-1}}, \qquad \eta \in \mathbb{S}^{d-1},
    \end{equation*}
denotes the standard spherical convolution of $f$ with the zonal scaling function
\begin{equation}\label{kernel zonal}
    \Phi^J = \sum_{n=0}^\infty \sum_{k \in \mathcal{I}_n^d} \phi^2\!\left( \frac{n}{2^J}\right) \overline{Y_{k}^{d, n}(e^d)}\,  Y_k^{d, n}.
\end{equation}
For such a kernel \eqref{kernel zonal}, it is well known that $\| f \ast \Phi^J\|_{L^p(\mathbb{S}^{d-1})} \leq c_p \|f \|_{L^p(\mathbb{S}^{d-1})}$. A proof of the latter can be found in \cite[Theorem~2.6.3]{bib3}.

It is easy to see that $\Lambda_Jf = f$ for each $f \in \Pi_{2^{J-1}}(\mathbb{S}^{d-1})$. Thus, if $P \in \Pi_{2^{J-1}}(\mathbb{S}^{d-1})$ is the best approximation of $f$ in $\Pi_{2^{J-1}}(\mathbb{S}^{d-1})$ w.r.t.\ $\|\cdot \|_{L^p(\mathbb{S}^{d-1})}$, then 
\begin{equation*}
     \| f - \Lambda_Jf\|_{L^p(\mathbb{S}^{d-1})} \leq  \| f - P\|_{L^p(\mathbb{S}^{d-1})}+  \| \Lambda_J(P - f)\|_{L^p(\mathbb{S}^{d-1})} \leq c_p E_{2^{J-1}}(f)_p.
\end{equation*}
\end{proof}

\section{Localization}\label{sec4}
In the following, we derive a localization bound for polynomial curvelets. Here, in contrast to the established estimates \eqref{localization bound isotr. introd} for spherical needlets \cite{bib15, bib16, bib3, bib81} and directional polynomial wavelets \cite{bib37, bib31}, the upper bound exhibits a direction-dependent decay, which indicates a particularly strong directional sensitivity. We note that, for $d=3$, where our frames essentially coincide with the second-generation curvelets from \cite{bib1}, the following localization estimate was previously established in \cite[Proposition~2.4]{bib26}.

\begin{theorem}\label{theorem curvelets localization}
Let $ \Psi_{\scriptscriptstyle \mathrm{C}}^{j}$, $j \in \mathbb{N}_0$, be as defined in \autoref{sec3} with $\kappa \in C^q([0, \infty))$ for some $q \in \mathbb{N}$. We assume that there exists some $t_0 \in (1/2, 2)$ such that $\kappa^{(q)}(t) \neq 0$ for each $t \in (1/2, t_0)$. Then
\begin{equation*}\label{localization version 1}
\lvert \Psi_{\scriptscriptstyle \mathrm{C}}^{j}(x)\rvert \leq \frac{c_{q} 2^{j(3d-2)/4} \lvert x_d+\mathrm{i}x_{d-1}\rvert^{2^{j-2}}}{(1+2^j \lvert \Arg(x_d+\mathrm{i}x_{d-1}) \rvert)^q}, \qquad x=(x_1, \dots, x_d) \in \mathbb{S}^{d-1}.
\end{equation*}
Here, $c_q>0$ is a constant which only depends on $\kappa, q$, and $d$.
\end{theorem}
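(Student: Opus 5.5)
Write $z=x_d+\mathrm{i}x_{d-1}=\rho\,\mathrm{e}^{\mathrm{i}\theta}$ with $\rho\in[0,1]$ and $\theta=\Arg(z)$. Set $a_n=\sqrt{\dim\mathcal{H}_n^d}\,A_{(n,\dots,n)}^n$. The plan is to start from the explicit representation \eqref{curvelets explicit},
\begin{equation*}
\Psi_{\scriptscriptstyle \mathrm{C}}^j(x)=\sqrt2\,\Re\sum_{n} a_n\,\kappa\!\left(\frac{n}{2^{j-1}}\right)\rho^n\mathrm{e}^{\mathrm{i}n\theta},
\end{equation*}
and estimate this one-dimensional trigonometric sum. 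Put $m=2^{j-2}$. Since $\supp\kappa\subset[1/2,2]$, only $m\le n\le 2^j$ contribute, so we may factor out $\rho^{m}$ and study
\begin{equation*}
S(\rho,\theta)=\sum_{n} b_n\mathrm{e}^{\mathrm{i}n\theta},\qquad b_n=a_n\,\kappa\!\left(\frac{n}{2^{j-1}}\right)\rho^{n-m}.
\end{equation*}
The claim then reduces to showing $|S|\le c_q2^{j(3d-2)/4}(1+2^j|\theta|)^{-q}$ uniformly in $\rho\in[0,1]$. Small $j$ are absorbed into the constant.

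First I would compute the size of $a_n$. From \eqref{A_k^n} with $k=(n,\dots,n)$ and Stirling's formula, $(A^n_{(n,\dots,n)})^2\sim n^{(d-2)/2}$. Together with \eqref{dimensions subspaces}, this gives $a_n\sim n^{3(d-2)/4}$. Moreover, $a_n$ is a ratio of Gamma functions extended smoothly in $n$, so its forward differences satisfy $|\Delta^i a_n|\le c\,n^{3(d-2)/4-i}$. For $|\theta|\le2^{-j}$ the trivial bound already suffices: at most $\sim2^j$ terms, each of size $\lesssim2^{j\,3(d-2)/4}$, which gives $2^{j(3d-2)/4}$.

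For $|\theta|>2^{-j}$ I would sum by parts $q$ times, i.e.\ use $S=(\mathrm{e}^{\mathrm{i}\theta}-1)^{-q}\sum_n(\Delta^q b)_n\mathrm{e}^{\mathrm{i}n\theta}$ up to a shift, with no boundary terms since $b$ has finite support. Since $|\mathrm{e}^{\mathrm{i}\theta}-1|\gtrsim|\theta|$, it suffices to show
\begin{equation*}
|(\Delta^q b)_n|\le c\,2^{-jq}\,2^{j\,3(d-2)/4}\quad\text{uniformly in }\rho,
\end{equation*}
and then sum over the $\lesssim 2^j$ nonzero indices. Expand $\Delta^q b$ by the discrete Leibniz rule into products of $\Delta^{i}\bigl[a_n\kappa(n/2^{j-1})\bigr]$ and $\Delta^{q-i}\rho^{n-m}=\rho^{n-m}(\rho-1)^{q-i}$, with shifts. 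The first factor is $\lesssim 2^{j\,3(d-2)/4}2^{-ji}$ by the mean value theorem, using $\kappa\in C^q$ and the bounds on $a_n$.

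The hard part is the second factor when $n$ is close to $m$. There $\rho^{n-m}(1-\rho)^{q-i}$ is not small, for instance when $\rho$ is small. I would handle this using that $\kappa\in C^q$ with support in $[1/2,2]$, so $\kappa^{(l)}(1/2)=0$ for $l\le q$. By Taylor's theorem, for $n=m+\ell$ the $i$-th difference of $\kappa(n/2^{j-1})$ is bounded by $c\,2^{-ji}(\ell 2^{-j})^{q-i}$. Combined with $\sup_{\rho\in[0,1]}\rho^{\ell}(1-\rho)^{q-i}\le c\,\ell^{-(q-i)}$, each Leibniz term is bounded by $c\,2^{-jq}2^{j\,3(d-2)/4}$, uniformly in $\rho$. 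For $\ell\lesssim q$ the bound is immediate, since $(1-\rho)^{q-i}\le1$ and $\ell^{q-i}$ is bounded. Plugging this into the summation by parts gives the stated estimate. The extra hypothesis that $\kappa^{(q)}\neq0$ near $1/2$ is not needed for this upper bound; I expect it only matters for sharpness.
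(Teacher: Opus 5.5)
Your argument is correct, but it follows a different route from the paper.

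\textbf{The paper's route.} The paper first expands $\sqrt{\dim\mathcal{H}_n^d}\,A^n_{(n,\dots,n)}=n^{3(d-2)/4}\bigl(\sum_{p<q}c_pn^{-p}+\mathcal{O}(n^{-q})\bigr)$ and discards the remainder trivially. For each power it writes the sum as a Riemann sum of $h_{z,N}(t)=\kappa(t)\,t^{3(d-2)/4-p}z^{Nt}$. Lemma~5 of \cite{bib14} then replaces the sum by $\int h_{z,N}(t)\cos(Nt\varphi)\,\mathrm{d}t$, at the cost of an error of order $N^{-q}\int\lvert h_{z,N}^{(q)}\rvert$. The integral is then estimated by integration by parts.

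The obstacle you isolated appears there in continuous form, as the factors $(N\ln z)^k z^{Nt}$ produced by differentiating $z^{Nt}$. The paper needs to control $\int z^{Nt}\lvert\kappa^{(q-\ell)}(t)\rvert\,\mathrm{d}t$ against these factors, and this is exactly where it uses the non-vanishing of $\kappa^{(q)}$ on $(1/2,t_0)$: that hypothesis lets one drop the absolute value near $1/2$ and integrate by parts.

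\textbf{Comparison.} Your version is fully discrete: $q$-fold Abel summation, the discrete Leibniz rule, the Taylor bound $\lvert\kappa^{(i)}(1/2+s)\rvert\le Cs^{q-i}$, and $\sup_{\rho\in[0,1]}\rho^{\ell}(1-\rho)^{k}\le c_k\ell^{-k}$.
\begin{itemize}
\item It is self-contained, and needs neither an external summation lemma nor a full asymptotic expansion of the coefficients.
\item It shows that the hypothesis on $\kappa^{(q)}$ is superfluous. The same Taylor bound would also remove it from the paper's argument, since $w^k\int_0^{3/2}e^{-ws}s^{\ell}\,\mathrm{d}s$ is bounded for $k\le\ell$ and all $w\ge 0$.
\item Consequently, that hypothesis is a device of the paper's proof, not something tied to sharpness as you guessed.
\end{itemize}
What the paper's route buys in exchange is the explicit integral main term. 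This is the machinery it reuses in Section~6 to obtain the asymptotic formula and the lower bound for the edge-detection coefficients.

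\textbf{Points to tighten.}
\begin{itemize}
\item The estimate $\lvert\Delta^i a_n\rvert\le c\,n^{3(d-2)/4-i}$ needs derivative bounds for the smooth Gamma-ratio extension $F$ of $a_n$, not just its size. These follow because $(\log F)^{(k)}(x)=\mathcal{O}(x^{-k})$ for $k\ge1$, by the standard polygamma bounds.
\item An $i$-th forward difference at $n=m+\ell$ looks $i$ steps ahead, so the Taylor bound should read $c\,2^{-ji}\bigl((\ell+i)2^{-j}\bigr)^{q-i}$. Your separate treatment of small $\ell$ already accommodates this.
\item Absorbing small $j$ into the constant works only for $j\ge1$, where every surviving term has $n>2^{j-2}$. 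For $j=0$ one has $\Psi_{\scriptscriptstyle\mathrm{C}}^0\equiv1$, while the right-hand side vanishes at points with $x_d=x_{d-1}=0$. So the statement itself should be read for $j\in\mathbb{N}$.
\end{itemize}
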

\begin{proof}
Our proof uses arguments similar to those in the two-dimensional setting \cite[Proposition~2.4]{bib26}. We start from the representation given in \eqref{curvelets explicit}, i.e.,
\begin{align*}
    \Psi_{\scriptscriptstyle \mathrm{C}}^j(x) =\sqrt{2}\sum_{n=0}^{\infty} \sqrt{\dim \mathcal{H}_n^d}  \, \kappa\!\left(\frac{n}{2^{j-1}} \right) A_{(n, \dots, n)}^n \lvert x_d+\mathrm{i}x_{d-1}\rvert^n \cos (n\Arg(x_{d}+\mathrm{i}x_{d-1})).
\end{align*}
Utilizing the duplication formula for the Gamma function, the normalization constants $A_{(n, \dots, n)}^n$ defined by \eqref{A_k^n} can be written in the form  
\begin{equation}\label{A_n^n formula}
A_{(n, \dots, n)}^n = \frac{2^{(2-d)/2}}{\sqrt{\Gamma(\frac{d}{2})}} \prod_{j=0}^{d-3}\sqrt{\frac{(2n+d-j-2) \Gamma(n+\frac{d-j-2}{2})}{\Gamma(n+\frac{d-j-1}{2})}}.
\end{equation}
With \eqref{dimensions subspaces} and \eqref{A_n^n formula}, it is easy to verify that there exist constants $c_0, c_1, \dots, c_{q-1}$ such that
\begin{equation*}
 \sqrt{\dim \mathcal{H}_n^d}\, A_{(n, \dots, n)}^n = n^{3(d-2)/4}\left(\sum_{p=0}^{q-1}c_p n^{-p}  + \mathcal{O}(n^{-q})\right), \qquad n \rightarrow \infty.
\end{equation*}
Consequently,
\begin{align*}
& \Psi_{\scriptscriptstyle \mathrm{C}}^j(x)  = R_j(x) + \sum_{p=0}^{q-1} c_p \sum_{n=0}^\infty \kappa\!\left(\frac{n}{2^{j-1}} \right) n^{3(d-2)/4-p} \lvert x_d+\mathrm{i}x_{d-1}\rvert^n \cos (n\Arg(x_d+\mathrm{i}x_{d-1}))
\end{align*}
where, due to the fact that $\supp(\kappa) \subset [1/2, 2]$,
\begin{equation*}
\lvert R_j(x) \rvert \leq c_q 2^{j((3d-2)/4-q)}\lvert x_d+\mathrm{i}x_{d-1}\rvert^{2^{j-2}}.
\end{equation*}
To complete the proof, it suffices to show that
\begin{align*}
\left\lvert \sum_{n=0}^\infty \kappa\!\left(\frac{n}{N} \right) n^{3(d-2)/4-p} z^n  \cos(n\varphi)\right\rvert \leq \frac{c_p z^{N/2}N^{(3d-2)/4}}{(1+N \varphi )^q},
\end{align*}
for each $N \in \mathbb{N}$, $z \in [0, 1]$, and $\varphi \in [0, \pi]$. In order to do so, we first observe that
\begin{equation*}
\kappa\!\left(\frac{n}{N} \right) n^{3(d-2)/4-p} z^n = N^{3(d-2)/4-p}h_{z, N}\!\left(\frac{n}{N}\right),
\end{equation*}
with $h_{z, N}(t) = \kappa(t) t^{3(d-2)/4-p} z^{Nt}$. Clearly, $h_{z, N} \in C^q([0, \infty))$ with $\supp(h_{z, N}) \subset [1/2, 2]$. Thus, it suffices to prove the estimate
\begin{equation}\label{key estimate}
\left\lvert \frac{1}{N} \sum_{n=0}^\infty h_{z, N}\!\left(\frac{n}{N} \right)   \cos(n\varphi)\right\rvert \leq \frac{c_q z^{N/2}}{(1+N \varphi )^q}.
\end{equation}
Applying \cite[Lemma~5]{bib14}, we have
\begin{align}\label{eq34}
\left\lvert \frac{1}{N} \sum_{n=0}^\infty h_{z, N}\!\left(\frac{n}{N} \right)   \cos(n\varphi)\right\rvert \leq \left\lvert \int_0^2h_{z, N}(t) \cos(N t \varphi) \, \mathrm{d}t \right\rvert + \frac{c_q V(h_{z, N}^{(q-1)})}{N^q},
\end{align}
where
\begin{equation*}
V(h_{z, N}^{(q-1)}) = \int_0^2\lvert h_{z, N}^{(q)}(t)\rvert \, \mathrm{d}t, \qquad  h_{z, N}^{(q)}(t) = \frac{\mathrm{d}^q}{\mathrm{d }t^q} h_{z, N}(t).
\end{equation*}
Here, $V(h_{z, N}^{(q-1)}) \leq c_q z^{N/2}$, which can be verified as follows. By Leibniz's rule, $h_{z, N}^{(q)}(t)$ is a linear combination of functions of the form
\begin{equation*}
\kappa^{(q-\ell)}(t) \frac{\mathrm{d}^{\ell-k}}{\mathrm{d}t^{\ell-k}}\left( t^{3(d-2)/4-p}\right) (\ln z^N)^k z^{Nt}, \qquad 0 \leq k\leq \ell\leq q.
\end{equation*}
Thus, it is sufficient to prove the estimate
\begin{equation}\label{eq33}
\lvert \ln(z^N)\rvert^k \int_{1/2}^2 z^{Nt} \lvert \kappa^{(q-\ell)}(t)\rvert \, \mathrm{d}t \leq c_q z^{N/2}.
\end{equation}
By definition of $\kappa$, we have $\kappa^{(m)}(1/2) = \kappa^{(m)}(2)=0$ for $0 \leq m \leq q$. Also, by assumption there exists some $t_0 \in (1/2, 2)$ such that $\kappa^{(q)}$ does not vanish in $(1/2, t_0)$. Hence, the mean value theorem implies that $\kappa^{(m)}(t)\neq 0$, for all $t \in (1/2, t_0)$, $0 \leq m \leq q$. It follows that
\begin{align*}
\int_{1/2}^2 z^{Nt} \lvert \kappa^{(q-\ell)}(t)\rvert \, \mathrm{d}t = \left\lvert \int_{1/2}^{t_0} z^{Nt} \kappa^{(q-\ell)}(t) \, \mathrm{d}t \right\rvert + \int_{t_0}^2 z^{Nt} \lvert \kappa^{(q-\ell)}(t)\rvert \, \mathrm{d}t.
\end{align*}
Repeated integration by parts yields
\begin{align*}
& \int_{1/2}^{t_0} z^{Nt} \kappa^{(q-\ell)}(t) \, \mathrm{d}t  = z^{Nt_0}\sum_{r=0}^{k-1}\frac{\kappa^{(q-\ell+1)}(t_0)}{(\ln z^N)^{r+1}} + \frac{(-1)^k}{(\ln z^N)^k}\int_{1/2}^{t_0} z^{N t} \kappa^{(q-\ell+k)}(t) \, \mathrm{d}t.
\end{align*}
Thus, since $s^\delta \ln(s)^m \rightarrow 0$ as $s \rightarrow 0^+$, for all $\delta >0$ and all $m \in \mathbb{N}$, it is now easy to see that \eqref{eq33} holds. Consequently, $V(h_{z, N}^{(q-1)}) \leq c_q z^{N/2}$, as claimed. Indeed, by the same argument we find that $V(h_{z, N}^{(m)}) \leq c_q z^{N/2}$, for each $0 \leq m \leq q-1$.

Lastly, we derive an estimate for the integral on the right hand sight of \eqref{eq34}. Here, applying integration by parts repeatedly and using the fact that $V(h_{z, N}^{(m)}) \leq c_q z^{N/2}$, for $0 \leq m \leq q-1$, we obtain
\begin{align*}
\left\lvert \int_0^2h_{z, N}(t) \cos(N t \varphi) \, \mathrm{d}t \right\rvert  &\leq \left\lvert \int_0^2h_{z, N}(t) \mathrm{e}^{\mathrm{i} N t \varphi} \, \mathrm{d}t \right\rvert\\
&=\left\lvert \int_0^2h_{z, N}(t) \mathrm{e}^{- \mathrm{i}t}\mathrm{e}^{\mathrm{i}  t(1+N \varphi)} \, \mathrm{d}t \right\rvert \\
& \leq \frac{c_q z^{N/2}}{(1+N \varphi)^q}.
\end{align*}
This completes the proof.
\end{proof}

\autoref{theorem curvelets localization} can be utilized to study the behavior of $\|\Psi_{\scriptscriptstyle \mathrm{C}}^{j} \|_{L^p(\mathbb{S}^{d-1})} $ as $j$ becomes large. Precisely, we have the following result.

\begin{corollary}\label{corollary norm estimate curvelet}
Let $ \Psi_{\scriptscriptstyle \mathrm{C}}^{j}$, $j \in \mathbb{N}_0$, be as defined in \autoref{sec3}, with the same assumptions on $\kappa \in C^q([0, \infty))$ as in \autoref{theorem curvelets localization}. If $0<p<\infty$ and $q > 1/p$, then
\begin{equation*}
\|\Psi_{\scriptscriptstyle \mathrm{C}}^{j} \|_{L^p(\mathbb{S}^{d-1})} \sim 2^{j((3d-2)/4 -d/(2p))}.
\end{equation*}
Moreover, (as long as $q \geq 1$) it holds that
\begin{equation*}
\|\Psi_{\scriptscriptstyle \mathrm{C}}^{j} \|_{L^\infty(\mathbb{S}^{d-1})} = \lvert \Psi_{\scriptscriptstyle \mathrm{C}}^{j}(e^d) \rvert \sim 2^{j(3d-2)/4}.
\end{equation*}
Here, the constants of equivalence only depend on $\kappa, q, p$, and $d$.
\end{corollary}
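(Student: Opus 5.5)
We prove the upper bound with \autoref{theorem curvelets localization} and the matching lower bound by a direct computation near the North Pole. Write $x=(x',x_{d-1},x_d)$ with $x'\in\mathbb{R}^{d-2}$, and set $\rho=\lvert x_d+\mathrm{i}x_{d-1}\rvert=\sqrt{1-\|x'\|_2^2}$ and $\varphi=\Arg(x_d+\mathrm{i}x_{d-1})$. Then $\Psi_{\scriptscriptstyle \mathrm{C}}^j(x)$ depends only on $(\rho,\varphi)$. Parametrize $\mathbb{S}^{d-1}$ by $x'=\sin(\beta)\,\eta$ with $\eta\in\mathbb{S}^{d-3}$ and $(x_{d-1},x_d)=\cos(\beta)(\sin\varphi,\cos\varphi)$, where $\beta\in[0,\pi/2]$. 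In these coordinates
\begin{equation*}
\mathrm{d}\omega_{d-1}=c\,\sin^{d-3}\beta\,\cos\beta\,\mathrm{d}\beta\,\mathrm{d}\varphi\,\mathrm{d}\omega_{d-3}(\eta)
\end{equation*}
for $d\ge 4$. For $d=3$ the $\eta$-integration is replaced by a sum over $\pm$, and the same weight applies.

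\emph{Upper bound.} Set $N=2^{j}$. By \autoref{theorem curvelets localization},
\begin{equation*}
\lvert\Psi_{\scriptscriptstyle \mathrm{C}}^j\rvert^p\le c\,N^{p(3d-2)/4}\,(\cos\beta)^{pN/4}\,(1+N\lvert\varphi\rvert)^{-pq}.
\end{equation*}
The $\varphi$-integral is at most $cN^{-1}$, since $pq>1$. For the $\beta$-integral, note that $(\cos\beta)^{pN/4}\le e^{-cpN\beta^2}$ on $[0,\pi/2]$, so
\begin{equation*}
\int_0^{\pi/2}\sin^{d-3}\beta\,(\cos\beta)^{pN/4}\,\mathrm{d}\beta\le c\,N^{-(d-2)/2}.
\end{equation*}
Altogether $\|\Psi_{\scriptscriptstyle \mathrm{C}}^j\|_p^p\le cN^{p(3d-2)/4-1-(d-2)/2}=cN^{p(3d-2)/4-d/2}$, which is the claimed upper rate. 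The upper bound for $p=\infty$ is immediate from the theorem.

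\emph{Sup norm and value at the pole.} By \eqref{curvelets explicit},
\begin{equation*}
\Psi_{\scriptscriptstyle \mathrm{C}}^j(e^d)=\sqrt2\sum_n\sqrt{\dim\mathcal{H}_n^d}\,\kappa(n/2^{j-1})\,A^n_{(n,\dots,n)}.
\end{equation*}
Every term is nonnegative, and each term is bounded by its absolute value at $e^d$, because $\lvert\rho^n\cos(n\varphi)\rvert\le1$. Hence $\|\Psi_{\scriptscriptstyle \mathrm{C}}^j\|_\infty=\Psi_{\scriptscriptstyle \mathrm{C}}^j(e^d)$. The asymptotics $\sqrt{\dim\mathcal{H}_n^d}\,A^n_{(n,\dots,n)}\sim n^{3(d-2)/4}$ from the proof of the theorem apply. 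Moreover, $\kappa\ge c>0$ on a fixed subinterval of $(1/2,2)$, since $\kappa\not\equiv0$ by \eqref{admissibility kappa} and $\kappa$ is continuous. Summing over the roughly $2^{j}$ indices $n$ in the corresponding range gives $\Psi_{\scriptscriptstyle \mathrm{C}}^j(e^d)\sim 2^{j}\cdot2^{j\cdot3(d-2)/4}=2^{j(3d-2)/4}$.

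\emph{Lower bound for $p<\infty$; the main obstacle.} We show that $\lvert\Psi_{\scriptscriptstyle \mathrm{C}}^j\rvert\ge cN^{(3d-2)/4}$ on the region $\{\beta\le\epsilon N^{-1/2},\ \lvert\varphi\rvert\le\epsilon N^{-1}\}$, for a small $\epsilon$ depending only on $\kappa$ and $d$. Integrating over this region then gives
\begin{equation*}
\|\Psi_{\scriptscriptstyle \mathrm{C}}^j\|_p^p\ge cN^{p(3d-2)/4}\,N^{-1}N^{-(d-2)/2},
\end{equation*}
which matches the upper bound. For the pointwise bound, compare $\Psi_{\scriptscriptstyle \mathrm{C}}^j(x)$ with $\Psi_{\scriptscriptstyle \mathrm{C}}^j(e^d)$ termwise. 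Only indices $n\le 2N$ contribute, and in the region we have $\rho^n\ge(1-\beta^2/2)^{2N}\ge 1-2\epsilon^2$ and $\cos(n\varphi)\ge 1-2\epsilon^2$. Since all coefficients are nonnegative, this yields $\Psi_{\scriptscriptstyle \mathrm{C}}^j(x)\ge(1-2\epsilon^2)^2\,\Psi_{\scriptscriptstyle \mathrm{C}}^j(e^d)\ge\tfrac12\Psi_{\scriptscriptstyle \mathrm{C}}^j(e^d)$ for small $\epsilon$. The point that needs care is exactly this positivity: it rules out cancellation between the terms and makes the comparison with the pole value legitimate. The constants depend only on $\kappa,q,p,d$. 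The hypothesis $q>1/p$ is used only in the upper bound, to make the $\varphi$-integral converge.
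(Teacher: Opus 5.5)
Your proof is correct. The upper bound and the identification $\|\Psi_{\scriptscriptstyle \mathrm{C}}^j\|_{L^\infty(\mathbb{S}^{d-1})}=\Psi_{\scriptscriptstyle \mathrm{C}}^j(e^d)$ follow the paper.

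\emph{Upper bound.} The paper works in standard spherical coordinates. There $\lvert x_d+\mathrm{i}x_{d-1}\rvert$ becomes a product of $d-2$ sines, and it bounds $d-2$ one-dimensional integrals by $c\,2^{-j/2}$ each. Your coordinates, adapted to $\mathbb{R}^{d-2}\times\mathbb{R}^2$, collapse this into a single Gaussian-type integral. It is the same estimate.

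\emph{Lower bound.} This is where you genuinely differ. The paper argues by interpolation. It computes $\|\Psi_{\scriptscriptstyle \mathrm{C}}^j\|_{L^2(\mathbb{S}^{d-1})}\sim 2^{j(d-1)/2}$ exactly from the definition. For $p<2$ it then uses $\|\Psi\|_2^2\le\|\Psi\|_\infty^{2-p}\|\Psi\|_p^p$. For $p>2$ it uses H\"older, $\|\Psi\|_2^2\le\|\Psi\|_p\|\Psi\|_{p'}$, together with the already proven upper bound at the dual exponent. You instead prove a pointwise bound $\Psi_{\scriptscriptstyle \mathrm{C}}^j\ge\frac12\Psi_{\scriptscriptstyle \mathrm{C}}^j(e^d)$ on an anisotropic box of size $N^{-1}\times(N^{-1/2})^{d-2}$. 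This rests on the nonnegativity of the coefficients $\sqrt{\dim\mathcal{H}_n^d}\,\kappa(n/2^{j-1})A^n_{(n,\dots,n)}$.

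\emph{What each route buys.}
\begin{itemize}
\item Your argument is more elementary and more informative. It exhibits the region where the curvelet attains full size, so the exponent $d/2=1+(d-2)/2$ is visibly the parabolic scaling.
\item Your lower bound needs neither $q>1/p$ nor the localization theorem. In particular, the paper's $p=\infty$ step pairs $\|\cdot\|_\infty$ with $\|\cdot\|_1$, whose upper bound requires $q>1$. Your direct evaluation at $e^d$ needs no condition on $q$.
\item The price is that your argument depends on positivity of the expansion coefficients, which is special to this construction. The paper's route uses only the $L^2$ norm and the upper bounds. It therefore carries over unchanged to generating functions whose coefficients change sign, as in the zonal setting it imitates.
\end{itemize}

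\emph{One small point to tidy up.} Your count of $\gtrsim 2^j$ indices with $\kappa(n/2^{j-1})\ge c$ only applies once $j$ is large. For the finitely many small $j$, use $\kappa(1)=\sqrt{\phi^2(1/2)-\phi^2(1)}=1$. Then the term $n=2^{j-1}$ already gives $\Psi_{\scriptscriptstyle \mathrm{C}}^j(e^d)>0$, and the constants absorb these cases.
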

\begin{proof}
We closely follow the arguments for the zonal setting given in \cite{bib30}. In some instances, it will be convenient to work with the functions $\Psi^{j} = T(g_0^{-1})\Psi_{\scriptscriptstyle \mathrm{C}}^{j}$ given by \eqref{curvelets def1}, rather than with the curvelets $\Psi_{\scriptscriptstyle \mathrm{C}}^{j}$ themselves. Of course, the latter does not pose any problems due to the rotational invariance of the surface measure $\omega_{d-1}$ on $\mathbb{S}^{d-1}$.

For $p=2$, the representation \eqref{curvelets def1} immediately yields
\begin{equation*}
\|\Psi^{j} \|_{L^2(\mathbb{S}^{d-1})}^2 = \sum_{n=0}^{\infty} \dim \mathcal{H}_n^d \left\lvert \kappa\!\left( \frac{n}{2^{j-1}} \right) \right\rvert^2.
\end{equation*}
Thus, using \eqref{dimensions subspaces}, it is not difficult to verify that $\|\Psi^j \|_{L^2(\mathbb{S}^{d-1})} \sim 2^{j(d-1)/2}$. 

In the general case $p\in (0, \infty)$, we can utilize \hyperref[theorem curvelets localization]{Theorem~\ref*{theorem curvelets localization}}, which, in terms of spherical coordinates \eqref{spherical coordinates}, implies the estimate
\begin{equation*}
\lvert \Psi^{j}(\theta_1, \dots, \theta_{d-1})\rvert \leq \frac{c_{q} \left(\prod_{m=2}^{d-1}\sin \theta_{m} \right)^{2^{j-2}} 2^{j(3d-2)/4}}{(1+2^j \lvert \theta_1\rvert)^q}.
\end{equation*}
With this estimate, a straightforward calculation yields 
\begin{equation*}
\|\Psi^{j} \|_{L^p(\mathbb{S}^{d-1})}^p \leq c_q 2^{j(p(3d-2)/4 -1)}\left( \int_{0}^{\pi/2} (\sin \theta)^{2^{j-2}p} \, \mathrm{d}\theta\right)^{d-2},
\end{equation*}
provided that $q > 1/p$. By the well-known formula
\begin{equation*}
\int_0^{\pi/2} \sin^m \theta \, \mathrm{d}\theta = \frac{\sqrt{\pi}\Gamma(\frac{m+1}{2})}{2 \Gamma(\frac{m+2}{2})} \leq c m^{-1/2},
\end{equation*}
it now follows that
\begin{equation*}
\|\Psi^{j} \|_{L^p(\mathbb{S}^{d-1})} \leq c 2^{j((3d-2)/4-d/(2p))}.
\end{equation*}

The estimate $\|\Psi_{\scriptscriptstyle \mathrm{C}}^{j} \|_{L^{\infty}(\mathbb{S}^{d-1})} = \lvert\Psi_{\scriptscriptstyle \mathrm{C}}^{j} (e^d) \rvert \leq c2^{j(3d-2)/4} $ is a direct consequence of \eqref{curvelets explicit} and \hyperref[theorem curvelets localization]{Theorem~\ref*{theorem curvelets localization}}. It can be used to obtain the corresponding lower bound in the case $0<p<2$, since
\begin{equation*}
\|\Psi_{\scriptscriptstyle \mathrm{C}}^{j} \|_{L^2(\mathbb{S}^{d-1})}^2 \leq \|\Psi_{\scriptscriptstyle \mathrm{C}}^{j} \|_{L^{\infty}(\mathbb{S}^{d-1})}^{2-p} \|\Psi_{\scriptscriptstyle \mathrm{C}}^{j} \|_{L^p(\mathbb{S}^{d-1})}^p.
\end{equation*}

For $2<p\leq \infty$, the lower estimate follows from the above and from Hölder's inequality. Precisely, we have
\begin{equation*}
\|\Psi_{\scriptscriptstyle \mathrm{C}}^{j} \|_{L^2(\mathbb{S}^{d-1})}^2 \leq \|\Psi_{\scriptscriptstyle \mathrm{C}}^{j} \|_{L^p(\mathbb{S}^{d-1})} \|\Psi_{\scriptscriptstyle \mathrm{C}}^{j}\|_{L^q(\mathbb{S}^{d-1})}, \quad q = \frac{p}{p-1},
\end{equation*}
as well as
\begin{equation*}
\|\Psi_{\scriptscriptstyle \mathrm{C}}^{j} \|_{L^2(\mathbb{S}^{d-1})}^2 \leq \|\Psi_{\scriptscriptstyle \mathrm{C}}^{j} \|_{L^\infty(\mathbb{S}^{d-1})}  \|\Psi_{\scriptscriptstyle \mathrm{C}}^{j} \|_{L^1(\mathbb{S}^{d-1})}.
\end{equation*}
This completes the proof.
\end{proof}

We note that the asymptotic behavior 
\begin{equation}\label{norm behavior 1}
\|\Psi_{\scriptscriptstyle \mathrm{C}}^{j} \|_{L^p(\mathbb{S}^{d-1})} \sim 2^{j((3d-2)/4 -d/(2p))}, \qquad 0<p \leq \infty,
\end{equation}
established in \hyperref[corollary norm estimate curvelet]{Corollary~\ref{corollary norm estimate curvelet}}, deviates from the typical scaling of the $L^p$-norms for previously considered polynomial frames. Specifically, if $(\tilde{\Psi}^j)_{j=0}^\infty$ represents an initial sequence of spherical needlets or directional polynomial wavelets on $\mathbb{S}^{d-1}$, then, according to \cite{bib30} and \cite{bib37}, 
\begin{equation}\label{norm behavior 2}
\|\tilde{\Psi}^{j} \|_{L^p(\mathbb{S}^{d-1})} \sim 2^{j(d-1)(1-1/p)}, \qquad 0<p \leq \infty.
\end{equation}
Clearly, \eqref{norm behavior 1} and \eqref{norm behavior 2} only coincide in the case $p=2$. 

\section{Directional sensitivity}\label{sec5}
For the moment, let $(\Psi^j)_{j=0}^\infty$ be an arbitrary sequence of spherical polynomials $\Psi^j \in \Pi_{2^j}(\mathbb{S}^{d-1})$. We assume that each of these polynomials is (in some sense) concentrated at the North Pole $e^d$. In this setting, the functions
\begin{equation*}
    T^d(h)\Psi^j, \qquad h \in SO(d-1),
\end{equation*}
are also concentrated at $e^d$, and we refer to them as orientations of $\Psi^j$. As proposed in the two-dimensional setting \cite{bib21}, the directionality of $\Psi^j$ can be measured using the auto-correlation function
\begin{equation*}
   SO(d-1) \rightarrow\mathbb{C}, \qquad h \mapsto  \langle T^d(h)\Psi^j, \Psi^j \rangle_{\mathbb{S}^{d-1}},
\end{equation*}
which tests $\Psi^j$ against its various orientations. In this context, a high directional sensitivity of $\Psi^j$ is associated with an auto-correlation function that is well localized at the identity matrix $\mathrm{I}_d \in SO(d-1)$.

Previous constructions of localized polynomial frames generated by an initial sequence $(\Psi^j)_{j=0}^\infty$, namely spherical needlets \cite{bib11, bib3, bib15, bib16, bib29, bib30, bib14, bib46, bib48, bib49, bib25}, directional wavelets \cite{bib21, bib10, bib9, bib37}, and second-generation curvelets \cite{bib1, bib26}, differ vastly in their degree of directionality.  Specifically, if $(\Psi_{\scriptscriptstyle\mathrm{N}}^j)_{j=0}^\infty$ denotes an initial sequence of spherical needlets, then
\begin{equation*}
    \langle T^d(h)\Psi_{\scriptscriptstyle\mathrm{N}}^j, \Psi_{\scriptscriptstyle\mathrm{N}}^j  \rangle_{\mathbb{S}^{d-1}} = \| \Psi_{\scriptscriptstyle\mathrm{N}}^j\|_{L^2(\mathbb{S}^{d-1})}^2, \qquad \text{for each } h \in SO(d-1).
\end{equation*}
Hence, the corresponding auto-correlation functions are constant and the associated frames do not exhibit any directionality whatsoever. If $(\Psi_{\scriptscriptstyle \textrm{W}, K}^j)_{j=0}^\infty$ denotes an initial sequence of directional wavelets on $\mathbb{S}^2$, as defined in \cite{bib21, bib9}, where $K\in \mathbb{N}_0$ is some fixed parameter controlling the degree of anisotropy, then
\begin{equation}\label{auto-corr wav 2}
     \langle T^3(h(\gamma))\Psi_{\scriptscriptstyle \textrm{W}, K}^j, \Psi_{\scriptscriptstyle \textrm{W}, K}^j  \rangle_{\mathbb{S}^{2}} = \sum_{n=0}^\infty \dim \mathcal{H}_n^3 \left\lvert \kappa\!\left(\frac{n}{2^{j-1}} \right) \right\rvert^2 \cos^{K_n}\gamma, \qquad \gamma \in [0, 2\pi),
\end{equation}
in which $K_n = \min(K, n)$ and $h(\gamma)\in \mathbb{R}^{3\times 3}$ is a positive rotation by $\gamma$ in the $(x_1, x_2)$-plane. Recently, we introduced directional polynomial wavelets $\Psi_{\scriptscriptstyle \textrm{W}, K}^j \in \Pi_{2^j}( \mathbb{S}^{d-1})$ for higher-dimensional spheres (see \cite{bib37}). Here, we have shown that
\begin{equation}\label{auto-corr wave d-1}
    \langle T^d(h) \Psi_{\scriptscriptstyle \textrm{W}, K}^j,  \Psi_{\scriptscriptstyle \textrm{W}, K}^j\rangle_{\mathbb{S}^{d-1}}= \sum_{n=0}^{\infty}\dim \mathcal{H}_n^d  \left\lvert \kappa\!\left(\frac{n}{2^{j-1}} \right) \right\rvert^2 (\langle e^{d-1} , h e^{d-1}\rangle)^{K_n}, \qquad h \in SO(d-1).
\end{equation}

The formulas \eqref{auto-corr wav 2} and \eqref{auto-corr wave d-1} imply that, in contrast to spherical needlets, directional polynomial wavelets can exhibit a strong degree of directional sensitivity, provided that $K$ is chosen large enough. However, since $K$ is a parameter that must be fixed a priori, the directionality does not increase with $j$. Frames with this property are also referred to as steerable frames (see \cite{bib64, bib21, bib9, bib10, bib37, bib77}).

Among the previously considered constructions of localized polynomial frames, second-generation curvelets on $\mathbb{S}^2$, as introduced in \cite{bib1}, stand out because they are unlimited in their directional resolution. Indeed, this property carries over to the higher-dimensional polynomial curvelets defined in \autoref{sec3}. Specifically, we have the following result.
\begin{proposition}\label{prop curvelet autocorr}
    Let $ \Psi_{\scriptscriptstyle \mathrm{C}}^{j}$ be as defined in \autoref{sec3}. For $d=3$,
    \begin{equation}\label{autocorr d=3 curv}
        \langle T^3(h(\gamma))  \Psi_{\scriptscriptstyle \mathrm{C}}^{j},  \Psi_{\scriptscriptstyle \mathrm{C}}^{j}\rangle_{\mathbb{S}^{2}} = \sum_{n=0}^\infty \dim \mathcal{H}_n^3  \left\lvert \kappa\!\left( \frac{n}{2^{j-1}} \right)\right\rvert^2 \!\left(\cos^{2n}\frac{\gamma}{2} + \sin^{2n}\frac{\gamma}{2} \right), \quad \gamma \in [0, 2\pi),
    \end{equation}
    where $h(\gamma)\in \mathbb{R}^{3\times3}$ is a positive rotation by $\gamma$ in the $(x_1,x_2)$-plane. If $d \geq 4$, then
    \begin{align}\label{autocorr d>3 curv}
        &\langle T^d(h)  \Psi_{\scriptscriptstyle \mathrm{C}}^{j},  \Psi_{\scriptscriptstyle \mathrm{C}}^{j}\rangle_{\mathbb{S}^{d-1}}\nonumber \\
        &\qquad  =2\sum_{n=0}^\infty \dim \mathcal{H}_n^d \left\lvert \kappa\!\left( \frac{n}{2^{j-1}} \right)\right\rvert^2 \sum_{\substack{m=0 \\ m \text{ even}}}^n\lvert c(d, n, m)\rvert^2 \frac{\Gamma(d-3) m!}{\Gamma(m+d-3)} C_m^{\frac{d-3}{2}}(\langle h e^{d-1}, e^{d-1}\rangle),
    \end{align}
    for each $h \in SO(d-1)$, where
    \begin{equation*}
        c(d, n, m) = A_{(m, 0, \dots, 0)}^n A_{(n, \dots, n)}^n \frac{ \sqrt{\pi}\,\Gamma(\frac{d}{2}) \binom{n}{m} \Gamma(m+d-3) (-\mathrm{i})^m}{2^{m+d-4}(n+\frac{d-2}{2})\Gamma(\frac{d-2}{2}) \Gamma(\frac{d-3}{2})\Gamma(m+\frac{d-2}{2})}.
    \end{equation*}
\end{proposition}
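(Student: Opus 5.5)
The plan is to reduce the auto-correlation to a single-degree computation and then evaluate that inner product in closed form. By orthogonality of the spaces $\mathcal{H}_n^d$ and the fact that each $T^d(h)$ preserves them, \eqref{curvelets explicit} gives
\begin{equation*}
\langle T^d(h)\Psi_{\scriptscriptstyle \mathrm{C}}^{j}, \Psi_{\scriptscriptstyle \mathrm{C}}^{j}\rangle_{\mathbb{S}^{d-1}} = \sum_{n=0}^\infty \dim \mathcal{H}_n^d \left\lvert \kappa\!\left(\frac{n}{2^{j-1}}\right)\right\rvert^2 \langle T^d(h)F_n, F_n\rangle_{\mathbb{S}^{d-1}}.
\end{equation*}
Here $F_n = \sqrt{2}\,A_{(n,\dots,n)}^n \Re\{(x_d+\mathrm{i}x_{d-1})^n\}$ has unit $L^2$-norm, being $T^d(g_0)$ applied to $(Y_{(n,\dots,n,n)}^{d,n}+Y_{(n,\dots,n,-n)}^{d,n})/\sqrt{2}$. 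It therefore suffices to compute $a_n(h) := \langle T^d(h)F_n,F_n\rangle$ for a single degree $n$. Since $h \in SO(d-1)$ fixes $e^d$, the function $T^d(h)F_n$ is obtained from $F_n$ by replacing $e^{d-1}$ with $u := he^{d-1}$ in the expression $x_{d-1} = \langle x, e^{d-1}\rangle$.

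\textbf{Closed form via isotropic vectors.} For $w\in\mathbb{C}^d$ with $w\cdot w=0$, the polynomial $x\mapsto (w\cdot x)^n$ is harmonic. I would use the standard identity
\begin{equation*}
\langle (w\cdot x)^n, (w'\cdot x)^n\rangle_{\mathbb{S}^{d-1}} = \gamma_n\, (w\cdot \overline{w'})^n ,
\end{equation*}
where $\gamma_n$ is a constant depending only on $n$ and $d$. This follows by $SO(d)$-invariance and $\mathbb{C}$-bilinearity (both sides are reproducing-kernel type pairings on $\mathcal{H}_n^d$), or can be checked directly via the addition theorem \eqref{addition thm}.

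Now take $w_u = e^d+\mathrm{i}u$ and $w_v = e^d+\mathrm{i}v$ with $u=he^{d-1}$, $v=e^{d-1}$, and write $\Re\{z^n\} = (z^n+\bar z^n)/2$. Then $a_n(h)$ is a sum of four terms involving
\begin{equation*}
w_u\cdot\overline{w_v} = 1+t, \qquad w_u\cdot w_v = 1-t, \qquad t:=\langle he^{d-1}, e^{d-1}\rangle .
\end{equation*}
Normalizing via the case $h=\mathrm{I}$, where $t=1$ and $a_n=1$, should give
\begin{equation*}
a_n(h) = \left(\tfrac{1+t}{2}\right)^n + \left(\tfrac{1-t}{2}\right)^n .
\end{equation*}
For $d=3$ with $h=h(\gamma)$, we have $t=\cos\gamma$, which is exactly \eqref{autocorr d=3 curv}. (One should double-check that the $n=0$ term is consistent: $\kappa(0)=0$, so it does not contribute.)

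\textbf{Gegenbauer expansion for $d\ge 4$.} It remains to expand $a_n$ in Gegenbauer polynomials $C_m^{(d-3)/2}(t)$, which is natural because $F_n$ is $SO(d-2)$-invariant. Under restriction from $SO(d)$ to $SO(d-1)$, the space $\mathcal{H}_n^d$ splits into the components with $k_1=m$, $0\le m\le n$. The $SO(d-2)$-invariant vector in the $m$-th component is, in the coordinates rotated by $g_0$, the function $Y_{(m,0,\dots,0)}^{d,n}$. Its $\mathbb{S}^{d-2}$-factor is zonal about $e^{d-1}$, so by the addition theorem on $\mathbb{S}^{d-2}$,
\begin{equation*}
\langle T^d(h)Y,Y\rangle = \frac{\Gamma(d-3)\, m!}{\Gamma(m+d-3)}\, C_m^{\frac{d-3}{2}}(t).
\end{equation*}
Writing $F_n=\sum_m \beta_m\, Y_{(m,0,\dots,0)}^{d,n}\circ g_0^{-1}$ therefore yields \eqref{autocorr d>3 curv} with $|\beta_m|^2 = 2|c(d,n,m)|^2$. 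Odd $m$ drop out because the components of $(x_d+\mathrm{i}x_{d-1})^n$ and of its conjugate carry factors $(\pm\mathrm{i})^m$, which cancel in the real part. This is consistent with the parity of $a_n$ in $t$.

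\textbf{Main obstacle.} The hard step is the explicit value of $c(d,n,m)$. I would compute it as $\langle (x_d+\mathrm{i}x_{d-1})^n, Y_{(m,0,\dots,0)}^{d,n}\rangle$ in the coordinates adapted to $e^{d-1}$. Expanding binomially, $(x_d+\mathrm{i}x_{d-1})^n = \sum_{\ell} \binom{n}{\ell}\, \mathrm{i}^{\ell} x_{d-1}^{\ell} x_d^{n-\ell}$, and only the top term $\ell = m$ in the $x_{d-1}$-degree pairs with $C_m^{(d-3)/2}$. That term is handled by Funk--Hecke on $\mathbb{S}^{d-2}$, and the remaining one-dimensional integral in $x_d$ is evaluated with the Rodrigues formula for $C_{n-m}^{(d-2)/2+m}$; this should produce the factors $\binom{n}{m}$, $(-\mathrm{i})^m$ and the Gamma quotients in $c(d,n,m)$. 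As a fallback I would expand $((1+t)/2)^n+((1-t)/2)^n$ directly in $C_m^{(d-3)/2}$ using \eqref{gegenbauer coeff} and a Beta integral, and match the two expressions.
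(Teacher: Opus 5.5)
Your overall structure is correct, and it takes a genuinely different route from the paper's. The paper stays inside the matrix-function calculus. It writes the degree-$n$ contribution as $L_n(h)$, a sum of four coefficients $t^{d,n}(g_0^{-1}hg_0)$. It then proves that $L_n$ is bi-$SO(d-2)$-invariant and hence a combination of the $t^{d-1,m}_{0,0}$, and uses the addition formula and the conjugation/inversion symmetries to reduce the coefficients to $|c(d,n,m)|^2$. Finally it evaluates $c(d,n,m)=t^{d,n}_{(n,\dots,n),(m,0,\dots,0)}(g_0^{-1})$ by an explicit integral; for $d=3$ it simply cites the earlier two-dimensional work.

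Your isotropic-vector computation instead gives the closed form $a_n(h)=\bigl(\tfrac{1+t}{2}\bigr)^n+\bigl(\tfrac{1-t}{2}\bigr)^n$ for every $d\ge 3$. This proves the case $d=3$ directly, and it shows that \eqref{autocorr d>3 curv} is just the Gegenbauer expansion of an explicit even polynomial, so the odd $m$ vanish at once. It also makes the concentration at $t=\pm1$ transparent. Decomposing the vector $F_n$ in the $SO(d-2)$-invariant basis is likewise shorter than the bi-invariance argument. What the paper's route buys is the identification of $c(d,n,m)$ as a matrix coefficient of $T^d(g_0^{-1})$, in line with the framework it builds on.

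Your justification of the pairing identity is thin. The clean argument: the degree-$n$ homogeneous polynomial in $y$ that agrees with $C_n^{(d-2)/2}(\langle x,y\rangle)$ on $\mathbb{S}^{d-1}$ reduces, at an isotropic $\zeta$, to a multiple of $(x\cdot\zeta)^n$. The reproducing property then yields $\int P(x)(x\cdot\zeta)^n\,\mathrm{d}\omega_{d-1}(x)=\gamma_n P(\zeta)$ for all $P\in\mathcal{H}_n^d$.

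Two points need repair.

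First, the paper's $SO(d-2)$ fixes $e^{d-1}$ and $e^d$, and $Y^{d,n}_{(m,0,\dots,0)}$ already depends only on $(x_{d-1},x_d)$. So you must write $F_n=\sum_m\beta_m Y^{d,n}_{(m,0,\dots,0)}$ without composing with $g_0^{-1}$. That composition moves the pole to $g_0e^d\perp e^d$ and destroys the zonality about $e^{d-1}$ that you use in the next line.

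Second, your main route to $c(d,n,m)$ would give the wrong constant, because it is false that only $\ell=m$ contributes. The integral of $s^\ell C^{(d-3)/2}_m(s)$ over $\mathbb{S}^{d-2}$ vanishes only for $\ell<m$ or $\ell\not\equiv m \pmod 2$. Every surviving $\ell$ leaves, against $C^{(d-2)/2+m}_{n-m}(x_d)$ and its weight, the polynomial $x_d^{n-\ell}(1-x_d^2)^{(\ell-m)/2}$, which has exact degree $n-m$ and pairs nontrivially. For example, for $d=4$, $n=2$, $m=0$ one has $Y^{4,2}_{(0,0)}=4x_4^2-1$ and $\int_{\mathbb{S}^3}(x_4^2-x_3^2)(4x_4^2-1)\,\mathrm{d}\omega_3=1/3$, while the $\ell=0$ term alone gives $1/4$.

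There are two ways to fix this.
\begin{itemize}
\item Apply Funk--Hecke on $\mathbb{S}^{d-2}$ to the whole function $s\mapsto(\cos\theta+\mathrm{i}s\sin\theta)^n$. This produces the Laplace--Gegenbauer integral $\propto \mathrm{i}^m\sin^m\theta\, C^{(d-2)/2+m}_{n-m}(\cos\theta)$ that the paper uses, after which only the norm of $C^{(d-2)/2+m}_{n-m}$ is needed.
\item Use your fallback. Since the statement involves only $|c(d,n,m)|^2$, compute the Gegenbauer coefficients of $\bigl(\tfrac{1+t}{2}\bigr)^n+\bigl(\tfrac{1-t}{2}\bigr)^n$ by Rodrigues' formula and a Beta integral, and match them with $2|c(d,n,m)|^2\,\Gamma(d-3)\,m!/\Gamma(m+d-3)$. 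This is a routine Gamma-function check with the explicit $A^n_k$, and it makes the decomposition step unnecessary.
\end{itemize}
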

\begin{proof}
For $d=3$, the analysis functions $\Psi_{\scriptscriptstyle \mathrm{C}}^{j}$ (essentially) coincide with the second-generation curvelets from \cite{bib1}, and \eqref{autocorr d=3 curv} follows from \cite[Proposition~2.3]{bib26}.

Now, we consider the case $d \geq 4$. The following proof relies on fundamental properties of the matrix functions associated with the unitary group representation $T^d$. These properties are summarized in the \hyperref[appendix]{Appendix} of this article. By \eqref{curvelets def}, we have
    \begin{equation*}
       \Psi_{\scriptscriptstyle \mathrm{C}}^{j} = \frac{1}{\sqrt{2}}\sum_{n=0}^\infty \sqrt{\dim \mathcal{H}_n^d} \,\kappa\!\left( \frac{n}{2^{j-1}} \right)  \left(T^d(g_0) Y_{(n, \dots, n)}^{d, n} + T^d(g_0)Y_{(n, \dots, n)^-}^{d, n} \right),
    \end{equation*}
    with $(n, \dots, n, n)^- = (n, \dots, n, -n)$. Hence, it follows from the definition of the matrix functions in \eqref{matrix fct def} that
    \begin{equation*}
         \langle T^d(h)  \Psi_{\scriptscriptstyle \mathrm{C}}^{j},  \Psi_{\scriptscriptstyle \mathrm{C}}^{j}\rangle_{\mathbb{S}^{d-1}} = \sum_{n=0}^\infty \dim \mathcal{H}_n^d  \left\lvert \kappa\!\left( \frac{n}{2^{j-1}} \right)\right\rvert^2 L_n(h),
    \end{equation*}
    where
    \begin{align}\label{matrixfct curvelets auto}
        & L_n(h)= \frac{1}{2} \bigg(t_{(n, \dots, n), (n, \dots, n)}^{d, n}(g_0^{-1} h g_0) +  t_{(n, \dots, n)^-, (n, \dots, n)}^{d, n}(g_0^{-1} h g_0) \nonumber \\
        & \qquad\qquad\qquad\qquad  + t_{(n, \dots, n), (n, \dots, n)^-}^{d, n}(g_0^{-1} h g_0) + t_{(n, \dots, n)^-, (n, \dots, n)^-}^{d, n}(g_0^{-1} h g_0)\bigg).
    \end{align}
    Here, using \eqref{matrix fct addition formula} and \eqref{matrix fct restrict}, we obtain
    \begin{equation}\label{involved matrix fct}
        t_{(n, \dots, n), (n, \dots, n)}^{d, n}(g_0^{-1} h g_0) = \sum_{m=0}^n\sum_{k, \ell \in \mathcal{I}_{m}^{d-1}} t_{(n, \dots, n), (m, k)}^{d, n}(g_0^{-1}) \,  t_{k, \ell}^{d-1, m}(h) t_{(m, \ell), (n, \dots, n)}^{d, n}(g_0).
    \end{equation}
    Similar expressions hold for the other matrix functions in \eqref{matrixfct curvelets auto}.

    Clearly, $L_n$ is an element of the polynomial subspace $\mathcal{M}_n^{1}(SO(d-1))$ defined in \eqref{class 1 matrix fct}. Also, by \eqref{curvelets explicit}, $\Psi_{\scriptscriptstyle \mathrm{C}}^{j}$ is invariant under rotations that keep $e^d$ and $e^{d-1}$ fixed, i.e.,
    \begin{equation}\label{invariance eq18k}
        T^d(k)\Psi_{\scriptscriptstyle \mathrm{C}}^{j} = \Psi_{\scriptscriptstyle \mathrm{C}}^{j}, \qquad \text{for each } k \in SO(d-2).
    \end{equation}
    Indeed, \eqref{invariance eq18k} is a direct consequence of the $SO(d-2)$-invariance of the spherical harmonics $T^d(g_0)Y_{(n, \dots, n)}^{d, n}$ and $T^d(g_0)Y_{(n, \dots, n)^-}^{d, n}$. It follows that $L_n$ exhibits the symmetry
    \begin{equation}\label{symmetry L_n}
        L_n(k_1 hk_2) = L_n(h), \qquad \text{for } k_1, k_2 \in SO(d-2), \; h \in SO(d-1).
    \end{equation}
    
    The symmetry \eqref{symmetry L_n} implies that $L_n$ is a linear combination of the matrix functions $t_{0, 0}^{d-1, m}$, as we will now demonstrate. Let $m \in \mathbb{N}_0$ and $r, s \in \mathcal{I}_m^{d-1}$. By \eqref{integral onion} and \eqref{matrix fct addition formula}, we have
    \begin{align*}
        \langle L_n, t_{r, s}^{d-1, m} \rangle_{SO(d-1)} &= \int_{\mathbb{S}^{d-2}}  \int_{SO(d-2)} L_n(h_{\eta} k) \, \overline{t_{r, s}^{d-1, m}(h_{\eta} k)} \, \mathrm{d}\mu_{d-2}(k) \, \mathrm{d}\omega_{d-2}(\eta) \\
        &=\int_{\mathbb{S}^{d-2}} L_n (h_{\eta}) \int_{SO(d-2)} \overline{\sum_{q \in \mathcal{I}_m^{d-1}} t_{r, q}^{d-1, m}(h_{\eta})  t_{q, s}^{d-1, m}(k)} \, \mathrm{d}\mu_{d-2}(k) \, \mathrm{d}\omega_{d-2}(\eta).
    \end{align*}
    According to \eqref{matrix fct restrict}, it holds that $t_{q, s}^{d-1, m}(k) = \delta_{q_1, s_1} t_{(q_2, \dots, q_{d-2}), (s_2, \dots, s_{d-2})}^{d-2, s_1}(k)$, for each $k \in SO(d-2)$ and $d \geq 5$. For $d=4$, \eqref{matrix fct restrict d=3} yields $t_{q, s}^{3, m}(k(\gamma)) = \delta_{q, s} \mathrm{e}^{\mathrm{i}q\gamma}$, where $k(\gamma)\in \mathbb{R}^{3\times 3}$ is a positive rotation by $\gamma$ in the $(x_1, x_2)$-plane. Thus, due to the orthogonality of the matrix functions and due to the fact that $t_{0, 0}^{d-2, 0} \equiv 1$, for $d \geq 5$, we obtain
    \begin{equation*}
        \langle L_n, t_{r, s}^{d-1, m} \rangle_{SO(d-1)} = \delta_{s, 0} \langle L_n, t_{r,0}^{d-1, m} \rangle_{SO(d-1)}.
    \end{equation*}
    At this point, we confirmed that $L_n$ can be written as a linear combination of the matrix functions $t_{r, 0}^{d-1, m}$, with $r \in \mathcal{I}_m^{d-1}$, $0 \leq m \leq n$. In particular, using \eqref{matrix fct and spherical harmonics}, the function $F_n \colon\mathbb{S}^{d-2} \rightarrow \mathbb{C}$ defined by $F_n(\eta) = L_n(h_\eta)$ constitutes a polynomial of degree $n$ on $\mathbb{S}^{d-2}$, i.e., $F_n \in \Pi_n(\mathbb{S}^{d-2})$. By \eqref{symmetry L_n},
    \begin{equation*}
        F_n(k\eta) = L_n(k h_\eta)= L_n(h_\eta) = F_n(\eta), \qquad \text{for each } k \in SO(d-2), \; \eta \in \mathbb{S}^{d-2}.
    \end{equation*}
    Since $Y_0^{d-1, m}$ is the unique element in $\mathcal{H}_m^{d-1}$, up to multiplication by a scalar, that is invariant under the group $SO(d-2)$ (see e.g.\ \cite[Lemma~1.7.1]{bib3}), it follows that
    \begin{equation*}
        F_n = \sum_{m=0}^n \langle F_n, Y_0^{d-1, m}\rangle_{\mathbb{S}^{d-2}} Y_0^{d-1, m}.
    \end{equation*}
    Hence, by the relation $L_n(h) = F_n(he^{d-1})$ and by \eqref{matrix fct and spherical harmonics}, we obtain
    \begin{equation}\label{L_n property}
        \langle L_n, t_{r, s}^{d-1, m} \rangle_{SO(d-1)} = \delta_{s, 0} \delta_{r, 0} \langle L_n, t_{0, 0}^{d-1, m} \rangle_{SO(d-1)}.
    \end{equation}

    Now, we consider the representation of $L_n$ given in \eqref{matrixfct curvelets auto}. By expanding the involved matrix functions as in \eqref{involved matrix fct}, and by using \eqref{L_n property}, it follows that
    \begin{align*}
        & L_n(h) = \frac{1}{2}\sum_{m=0}^n t_{0, 0}^{d-1, m}(h) \big(t_{(n, \dots, n), (m, 0, \dots, 0)}^{d, n}(g_0^{-1}) t_{(m, 0, \dots, 0), (n, \dots, n)}^{d, n}(g_0) \\
        &\quad + t_{(n, \dots, n)^-, (m, 0, \dots, 0)}^{d, n}(g_0^{-1}) t_{(m, 0, \dots, 0), (n, \dots, n)}^{d, n}(g_0) + t_{(n, \dots, n), (m, 0, \dots, 0)}^{d, n}(g_0^{-1}) t_{(m, 0, \dots, 0), (n, \dots, n)^-}^{d, n}(g_0)\\
        & \quad + t_{(n, \dots, n)^-, (m, 0, \dots, 0)}^{d, n}(g_0^{-1}) t_{(m, 0, \dots, 0), (n, \dots, n)^-}^{d, n}(g_0) \big) .
    \end{align*}
    Moreover, the symmetries \eqref{symmetry matrix fct 1} and \eqref{symmetry matrix fct 2} yield
    \begin{align*}
         &L_n(h) = \frac{1}{2}\sum_{m=0}^n t_{0, 0}^{d-1, m}(h)\bigg( \left\lvert t_{(n, \dots, n), (m, 0, \dots, 0)}^{d, n}(g_0^{-1})\right\rvert^2 + \left\lvert t_{(n, \dots, n)^-, (m, 0, \dots, 0)}^{d, n}(g_0^{-1})\right\rvert^2 \\
         & \qquad + 2 \Re\left\{ \big(  t_{(n, \dots, n)^-, (m, 0, \dots, 0)}^{d, n}(g_0^{-1})\big)^2\right\} \bigg).
    \end{align*}
    Here, using \eqref{matrix fct and spherical harmonics}, it is not difficult to verify that
    \begin{equation*}
        t_{0, 0}^{d-1, m}(h) = \frac{\Gamma(d-3) m!}{\Gamma(m+d-3)}C_m^{\frac{d-3}{2}}(\langle h e^{d-1}, e^{d-1}\rangle).
    \end{equation*}
    For an explicit reference of this formula, we refer to \cite[p.~471]{bib32}. 
    
    In order to prove \eqref{autocorr d>3 curv}, it remains to show that
    \begin{equation}\label{matrix fct c(d,n,m)}
        t_{(n, \dots, n), (m, 0, \dots, 0)}^{d, n}(g_0^{-1}) = c(d, n, m).
    \end{equation}
    We have
    \begin{align*}
         t_{(n, \dots, n), (m, 0, \dots, 0)}^{d, n}(g_0^{-1}) &= \int_{\mathbb{S}^{d-1}} Y_{(m, 0, \dots, 0)}^{d, n}(x) \, \overline{T^d(g_0) Y_{(n, \dots, n)}^{d, n}(x)} \, \mathrm{d}\omega_{d-1}(x) \\
         &= A_{(m, 0, \dots, 0)}^n A_{(n, \dots, n)}^n \int_{\mathbb{S}^{d-1}} C_{n-m}^{\frac{d-2}{2}+m}(x_d) (1-x_d^2)^{m/2}\\
         & \quad \times C_m^{\frac{d-3}{2}}\!\bigg( \frac{x_{d-1}}{\sqrt{\smash[b]{1-x_d^2}}}\bigg) (x_d - \mathrm{i}x_{d-1})^n \, \mathrm{d}\omega_{d-1}(x).
    \end{align*}
    In terms of spherical coordinates \eqref{spherical coordinates}, 
    \begin{align*}
        &\int_{\mathbb{S}^{d-1}} C_{n-m}^{\frac{d-2}{2}+m}(x_d) (1-x_d^2)^{m/2}C_m^{\frac{d-3}{2}}\!\bigg( \frac{x_{d-1}}{\sqrt{\smash[b]{ 1-x_d^2}}}\bigg) (x_d - \mathrm{i}x_{d-1})^n \, \mathrm{d}\omega_{d-1}(x)\\
        &\qquad \qquad \qquad  \qquad = c(d) \int_0^\pi C_{n-m}^{\frac{d-2}{2}+m}(\cos \theta_{d-1}) H_{n, m}(\theta_{d-1}) (\sin\theta_{d-1})^{m+d-2} \, \mathrm{d}\theta_{d-1},
    \end{align*}
    where $c(d)= \Gamma(\frac{d}{2})/(\pi \Gamma(\frac{d-2}{2}))$ and
    \begin{equation*}
        H_{n,m}(\theta_{d-1}) = \int_0^\pi C_m^{\frac{d-3}{2}}(\cos\theta_{d-1}) (\cos \theta_{d-1} - \mathrm{i} \cos \theta_{d-2} \sin \theta_{d-1})^n \sin^{d-3} \theta_{d-2}\, \mathrm{d}\theta_{d-2}.
    \end{equation*}
    A well-known representation formula for Gegenbauer polynomials (see e.g.\ \cite[p.~483]{bib32}) yields
    \begin{equation*}
        H_{n, m}(\theta) = \frac{2^{m+1} \sqrt{\pi}\, \Gamma(\frac{d-2}{2}+m) n! \Gamma(m+d-3)}{m!\Gamma(\frac{d-3}{2}) \Gamma(n+m+d-2) \mathrm{i}^m} \sin^m \theta \,  C_{n-m}^{\frac{d-2}{2}+m}(\cos \theta).
    \end{equation*}
    Furthermore, as shown in \cite[p.~462]{bib32},
    \begin{equation*}
        \int_0^\pi \lvert C_{n-m}^{\frac{d-2}{2}+m}(\cos \theta)\rvert^2 (\sin \theta)^{2m+d-2} \, \mathrm{d}\theta = \frac{\pi \Gamma(n+m+d-2)}{2^{2m+d-3} (n-m)! (n+\frac{d-2}{2} ) \Gamma^2(m+ \frac{d-2}{2})}.
    \end{equation*}
    By combining all of the above, formula \eqref{matrix fct c(d,n,m)} can be verified, and thus the proof of \eqref{autocorr d>3 curv} is complete.

\end{proof}

\begin{figure}
\centering
\includegraphics[width=0.7\textwidth]{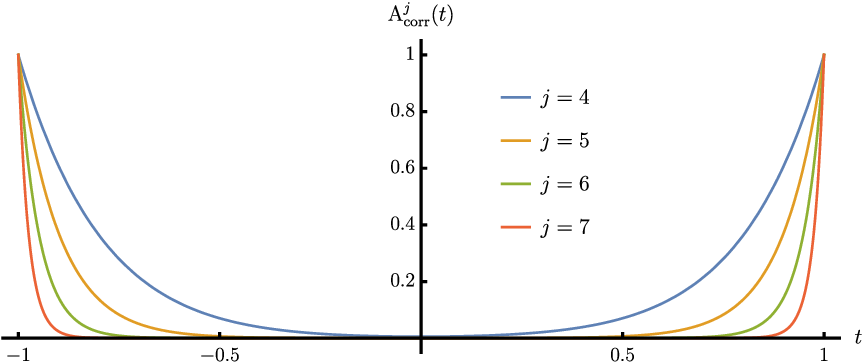}
\caption{Auto-correlation function $\mathrm{A}_{\mathrm{corr}}^j$ for $j=4, 5, 6, 7$}\label{figAutoCorr}
\end{figure}

In \autoref{figAutoCorr}, the normalized auto-correlation functions
\begin{equation*}\label{A_corr}
    \mathrm{A}_{\mathrm{corr}}^j(t)=\frac{\langle T^d(h)  \Psi_{\scriptscriptstyle \mathrm{C}}^{j},  \Psi_{\scriptscriptstyle \mathrm{C}}^{j}\rangle_{\mathbb{S}^{d-1}}}{\|    \Psi_{\scriptscriptstyle \mathrm{C}}^{j}\|_{L^2(\mathbb{S}^{d-1})}^2} , \qquad t=\langle h e^{d-1}, e^{d-1}\rangle,
\end{equation*}
are visualized for $d=4$, i.e., in the case where $ \Psi_{\scriptscriptstyle \mathrm{C}}^{j} \in \Pi_{2^j}(\mathbb{S}^3)$, and for different scales $j$. Here, the representation of 
$\langle T^d(h)  \Psi_{\scriptscriptstyle \mathrm{C}}^{j},  \Psi_{\scriptscriptstyle \mathrm{C}}^{j}\rangle_{\mathbb{S}^{d-1}}$ as a function of $t=\langle h e^{d-1}, e^{d-1}\rangle$ is given by \eqref{autocorr d>3 curv} and we chose $\kappa$ to be the $C^\infty$ window with $\supp (\kappa) = [1/2, 2]$ constructed in \cite{bib9}. 

\section{Detection of singularities}\label{sec6}
To conclude this article, we demonstrate the potential of polynomial curvelets for applications that require localized analyses of anisotropic features. Here, we focus on the problem of identifying edges and higher-order singularities by the asymptotic decay of the curvelet coefficients. Specifically, we will derive a higher-dimensional version of a previous result for the two-dimensional setting \cite[Theorem~3.3]{bib26}.

For $z \in \mathbb{S}^{d-1}$, $r \in (0, \pi)$, and $\tau \in \mathbb{N}_0$, we consider the isotropic/zonal test signal $f_{z, r, \tau} \colon \mathbb{S}^{d-1}\rightarrow \mathbb{R}$ given by
\begin{equation}\label{def test signal}
f_{z, r, \tau}(\eta)=  \tilde{f}_{\cos r, \tau}(\langle \eta, z\rangle), \quad \tilde{f}_{t_0, \tau}(t):=
\begin{cases}
( t-t_0)^\tau, \quad & t\in [t_0, 1],\\
0, & t \in [-1,t_0 ).
\end{cases}
\end{equation}
In particular, $f_{z, r, \tau}$ is smooth except for a discontinuity in the $r$-th derivative at the boundary $\partial C(z, r)$ of the spherical cap $C(z, r)$. For $\tau=0$, the test signal is just the characteristic function of $C(z, r)$, i.e., $f_{z, r, 0} = \mathbf{1}_{C(z, r)}$. In the following, we will assume that $z=e^d$. Also, we will use the simplified notation $f_{  r, \tau}:=f_{e^d, r, \tau} $.

According to the discussion following \autoref{fig4} in \autoref{sec3}, the curvelet $ \Psi_{\scriptscriptstyle \mathrm{C}}^{j}$ is localized at $e^d$ and decays fastest along the directions $\pm e^{d-1}$. Consequently, for $\nu \in \mathbb{S}^{d-1}$, the rotated curvelet $T^d(g_\nu^{-1}) \Psi_{\scriptscriptstyle \mathrm{C}}^{j}$ is localized at $g_\nu^{-1}e^{d}$ and decays fastest along the directions $\pm g_{\nu}^{-1} e^{d-1}$. It is reasonable to expect that the frame coefficients $\lvert \langle f_{r, \tau}, T^d(g_\nu^{-1})\Psi_{\scriptscriptstyle \mathrm{C}}^{j} \rangle_{\mathbb{S}^{d-1}}\rvert$ take their largest values when the curvelet $T^d(g_\nu^{-1}) \Psi_{\scriptscriptstyle \mathrm{C}}^{j}$ matches the boundary $\partial C(e^d, r)$ sufficiently well in terms of position and orientation. Here, we note that $T^d(g_\nu^{-1}) \Psi_{\scriptscriptstyle \mathrm{C}}^{j}$ is positioned exactly on a (higher-order) singularity of $f_{ r, \tau}$ if and only if
\begin{equation}\label{orientation match 0}
\langle g_\nu^{-1}e^d, e^d \rangle = \nu_d = \cos r.
\end{equation}
Moreover, provided that $ \nu \neq \pm e^d$, the curvelet $T^d(g_\nu^{-1}) \Psi_{\scriptscriptstyle \mathrm{C}}^{j}$ aligns exactly with the orientation of the boundary $\partial C(e^d, r)$ whenever
\begin{equation}\label{orientation match 1}
g_{\nu}^{-1} e^{d-1}= \pm\frac{e^d-\langle e^d, g_\nu^{-1}e^d\rangle g_\nu^{-1}e^d}{\| e^d-\langle e^d, g_\nu^{-1}e^d\rangle g_\nu^{-1}e^d \|_2}.
\end{equation}
Multiplying with $g_\nu$ from the left on both sides of \eqref{orientation match 1}, a simple calculation shows that \eqref{orientation match 1} is equivalent to
\begin{equation}\label{orientation match 2}
   \nu_{d-1}^2 + \nu_{d}^2 = 1, \qquad \text{i.e., } \quad  \nu = (0, \dots, 0, \nu_{d-1}, \nu_d).
\end{equation}
Thus, combining \eqref{orientation match 0} and \eqref{orientation match 2}, the curvelet $T^d(g_\nu^{-1}) \Psi_{\scriptscriptstyle \mathrm{C}}^{j}$ matches some part of the boundary $\partial C(e^d, r)$ perfectly, in terms of position and orientation, if
\begin{equation*}
    \nu = (0, \dots, 0, \pm \sin r, \cos r).
\end{equation*}
In the following, we will show that the curvelet coefficients $\langle f_{r, \tau}, T^d(g_\nu^{-1})\Psi_{\scriptscriptstyle \mathrm{C}}^{j} \rangle_{\mathbb{S}^{d-1}}$ peak when $\nu$ is sufficiently close to $(0, \dots, 0, \pm \sin r, \cos r)$. Otherwise, the inner products decay rapidly as $j$ increases. 

\subsection{Some auxiliary results}
Before we present the main theorems of this section, we need to establish some auxiliary results. The following lemma provides useful asymptotic formulas for the harmonic coefficients of the test signal defined in \eqref{def test signal}.
\begin{lemma}\label{lemma testsignal coeff}
    Let $\lambda = \frac{d-2}{2}$. It holds that
    \begin{equation}\label{test signal fourier coeff}
        \langle f_{r, \tau}, Y_k^{d, n} \rangle_{\mathbb{S}^{d-1}} =  \delta_{k, 0} \, b_n^\lambda(\tilde{f}_{\cos r, \tau}) \sqrt{\frac{\Gamma(n+d-2) (d-2)}{n! \Gamma(d-2) (2n+d-2)}}.
    \end{equation}
    Here, the Gegenbauer coefficients $ b_n^\lambda(\tilde{f}_{\cos r, \tau})$ are defined by \eqref{gegenbauer coeff}. Moreover, if
    \begin{equation*}
    \gamma(\lambda, r, \tau) = \frac{4^\lambda \Gamma(\lambda)^2 \Gamma(\lambda + \frac{1}{2}) \sqrt{\lambda} \tau! (\sin r)^{\lambda + \tau}}{2^{\tau + 1/2}\pi^{3/2} \Gamma(2\lambda)^{3/2}},
    \end{equation*}
    then, for each $\varepsilon>0$, the asymptotic expressions
    \begin{align}\label{test signal coeff asympt1}
    &\langle f_{r, \tau}, Y_0^{d, n} \rangle_{\mathbb{S}^{d-1}} = \gamma(\lambda, r, \tau) \, n^{-\tau-1} \cos ( (n+\lambda)r - (\lambda+\tau+1)\pi/2) + \mathcal{O}(n^{-\tau-2}),
    \end{align}
    $n \rightarrow \infty$, and 
    \begin{equation}\label{test signal coeff asympt2}
    \langle f_{r, \tau}, Y_0^{d, n} \rangle_{\mathbb{S}^{d-1}} = \Re\left\{ \mathrm{e}^{\mathrm{i}n r} \sum_{p=0}^{q-1} \phi_p(r) n^{-\tau-p-1}\right\} + \mathcal{O}(n^{-\tau-q-1}), \qquad n \rightarrow \infty,
    \end{equation}
    hold uniformly for all $r \in [\varepsilon, \pi-\varepsilon]$. Here, $\phi_p$, $p=0, \dots, q-1$, are continuous complex-valued functions.
\end{lemma}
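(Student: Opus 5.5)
The plan has three parts: derive the exact formula \eqref{test signal fourier coeff} from zonality, reduce the asymptotics to a Gegenbauer integral with a singular endpoint, and then expand that integral uniformly in $r$.

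\textbf{Exact formula.} Since $f_{r,\tau}$ depends only on $\langle \eta, e^d\rangle$, it is zonal. I first expand $\tilde f_{\cos r,\tau}$ in Gegenbauer polynomials as in \eqref{gegenbauer expansion}:
\[
f_{r,\tau}(\eta)=\sum_n b_n^\lambda(\tilde f_{\cos r,\tau})\, C_n^\lambda(\langle\eta,e^d\rangle).
\]
By the addition theorem \eqref{addition thm}, $C_n^\lambda(\langle \eta, e^d\rangle)=\frac{d-2}{2n+d-2}\sum_k \overline{Y_k^{d,n}(e^d)}\,Y_k^{d,n}(\eta)$. From \eqref{Y_k^n def}, every $Y_k^{d,n}$ with $k\neq 0$ contains a factor $\sin^{|k_1|}\theta_{d-1}$, so it vanishes at $e^d$. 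Hence only $k=0$ survives, and
\[
\langle f_{r,\tau},Y_0^{d,n}\rangle = b_n^\lambda\,\frac{d-2}{2n+d-2}\,Y_0^{d,n}(e^d).
\]
Applying the addition theorem at $\nu=\eta=e^d$ gives $|Y_0^{d,n}(e^d)|^2=\frac{2n+d-2}{d-2}C_n^\lambda(1)$, with $C_n^\lambda(1)=\Gamma(n+d-2)/(n!\,\Gamma(d-2))$. Since $A_0^n>0$, $Y_0^{d,n}(e^d)$ is positive. Substituting yields exactly \eqref{test signal fourier coeff}.

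\textbf{Reduction to an integral.} I next analyse $b_n^\lambda(\tilde f_{t_0,\tau})$ with $t_0=\cos r$. This requires the asymptotics of
\[
I_n=\int_{t_0}^1 (t-t_0)^\tau C_n^\lambda(t)(1-t^2)^{\lambda-1/2}\,dt .
\]
I would use the Rodrigues formula given in Section~\ref{sec2}: $C_n^\lambda(t)w_\lambda(t)$ is a constant multiple of $\frac{d^n}{dt^n}(1-t^2)^{n+\lambda-1/2}$. Integrating by parts $\tau+1$ times moves the derivatives onto $(t-t_0)^\tau$. The boundary terms at $t=1$ vanish, and those at $t_0$ vanish except for the last one. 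What remains is $\tau!$ times a constant times $\frac{d^{n-\tau-1}}{dt^{n-\tau-1}}(1-t^2)^{n+\lambda-1/2}$ evaluated at $t_0$. By the same Rodrigues formula this equals $(1-t_0^2)^{\tau+1/2+\ldots}$ times a Gegenbauer polynomial $C^{\lambda+\tau+1}_{n-\tau-1}(t_0)$ with a shifted parameter; this is the standard identity $\frac{d}{dt}\bigl[w_{\mu}C^{\mu}_m\bigr]\propto w_{\mu-1}C^{\mu-1}_{m+1}$, iterated. The result is an exact closed form:
\[
I_n=\text{(explicit Gamma ratio)}\cdot(\sin r)^{2\lambda+2\tau+1}\, C^{\lambda+\tau+1}_{n-\tau-1}(\cos r).
\]

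\textbf{Uniform asymptotics.} I then insert the classical Darboux-type expansion for $C_m^\mu(\cos r)$, valid uniformly on $[\varepsilon,\pi-\varepsilon]$ (Szeg\H{o}, Theorem~8.21.13):
\[
C_m^\mu(\cos r)=\frac{2^{1-\mu}}{\Gamma(\mu)}\sum_{p<q} c_p\,\frac{\Gamma(m+\mu-p)\ldots}{\ldots}\frac{\cos((m+\mu-p)r-(\mu-p)\pi/2)}{(\sin r)^{\mu+p}}\,+\,O(m^{\mu-q-1}).
\]
Combining this with Stirling-type expansions of the Gamma ratios in $b_n^\lambda$ and in the factor $\sqrt{\Gamma(n+d-2)/\ldots}$ as asymptotic series in $1/n$ gives \eqref{test signal coeff asympt2}, with coefficient functions $\phi_p$ that are continuous on $[\varepsilon,\pi-\varepsilon]$. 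In $C^{\lambda+\tau+1}_{n-\tau-1}$ the phase is $(n-\tau-1+\lambda+\tau+1)r=(n+\lambda)r$, so the $\mathrm{e}^{\mathrm{i}\lambda r}$ factor can be absorbed into $\phi_p$. Reading off the leading term and tracking the constants gives \eqref{test signal coeff asympt1} with the stated $\gamma(\lambda,r,\tau)$ and phase shift $(\lambda+\tau+1)\pi/2$.

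\textbf{Main obstacle.} The hardest step is the integration-by-parts identity: getting the exact constant and the correct shifted parameter, and checking that the intermediate boundary terms really vanish at $t_0$ because of the zero of order $\tau$ of $(t-t_0)^\tau$. After that, the bookkeeping of constants needed to match $\gamma(\lambda,r,\tau)$ is tedious but routine. Uniformity in $r$ follows directly from the uniformity of Szeg\H{o}'s expansion away from $0$ and $\pi$.
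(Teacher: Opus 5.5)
Your route matches the paper's. You derive the exact formula from the addition theorem and $Y_k^{d,n}(e^d)=0$ for $k\neq 0$. You then write $\int_{\cos r}^1 (t-\cos r)^\tau C_n^\lambda(t)w_\lambda(t)\,\mathrm{d}t$ in closed form as $(\sin r)^{2\lambda+2\tau+1}$ times a degree-$(n-\tau-1)$ polynomial whose parameter is shifted by $\tau+1$; the paper's $P_{n-\tau-1}^{(\lambda+\tau+1/2,\lambda+\tau+1/2)}$ is a constant multiple of your $C_{n-\tau-1}^{\lambda+\tau+1}$. Finally you apply a Darboux expansion that is uniform on $[\varepsilon,\pi-\varepsilon]$. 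The paper imports the closed form and the asymptotics from \cite{bib14}. Your Rodrigues/integration-by-parts derivation, including the boundary terms, is correct and makes that step self-contained. The exact formula \eqref{test signal fourier coeff} and the expansion \eqref{test signal coeff asympt2} follow as you describe.

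The step that fails is your closing claim that routine bookkeeping gives \eqref{test signal coeff asympt1} with the stated $\gamma(\lambda,r,\tau)$. You never do this bookkeeping, and doing it gives a different constant.

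\emph{Practical warning.} The Rodrigues formula printed in Section~\ref{sec2} is missing the factor $1/\Gamma(2\lambda)$: for $m=0$ it returns $\Gamma(2\lambda)$ instead of $1$. Using it verbatim at both parameters $\lambda$ and $\lambda+\tau+1$ puts your closed form off by a factor $\Gamma(2\lambda+2\tau+2)/\Gamma(2\lambda)$. The corrected formula reproduces \eqref{test signal gegenbauer coeff} exactly.

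\emph{Correct leading constant.} Use $C_m^\mu(\cos r)=\frac{2\Gamma(m+\mu)}{\Gamma(\mu)\,m!}(2\sin r)^{-\mu}\cos\bigl((m+\mu)r-\mu\pi/2\bigr)+\mathcal{O}(m^{\mu-2})$ together with Stirling. The leading coefficient comes out as
\[
\tau!\,\sqrt{\frac{2\,\Gamma(\lambda+1)}{\pi^{3/2}\,\Gamma(\lambda+\frac12)}}\,(\sin r)^{\lambda+\tau}=2^{\lambda+\tau-1/2}\,\gamma(\lambda,r,\tau).
\]
This agrees with the stated constant only for $d=3$, $\tau=0$.

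\emph{Direct checks.} For $d=4$, $\tau=0$ one has $Y_0^{4,n}(x)=C_n^1(x_4)$, the Chebyshev polynomial of the second kind. A direct computation gives
\[
\langle f_{r,0},Y_0^{4,n}\rangle_{\mathbb{S}^3}=\frac{1}{\pi}\Bigl(\frac{\sin nr}{n}-\frac{\sin (n+2)r}{n+2}\Bigr)=-\frac{2\sin r}{\pi n}\cos((n+1)r)+\mathcal{O}(n^{-2}).
\]
The lemma as stated predicts amplitude $\sqrt{2}\sin r/\pi$ instead of $2\sin r/\pi$. For $d=3$, $\tau=1$, a computation with Legendre polynomials gives amplitude $(\sin r)^{3/2}/\sqrt{\pi}$, twice the stated one.

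\emph{Where the paper goes wrong.} The paper reaches its $\gamma$ through two intermediate constants. In \eqref{eq38a}, the Darboux constant for $P_m^{(\alpha,\alpha)}$ with $\alpha=\lambda+\tau+\frac12$ (standard normalization) should be $2^{\lambda+\tau+1}/\sqrt{\pi m}$ rather than $\sqrt{2/(\pi m)}$. In \eqref{eq38c}, the prefactor $4^\lambda$ should be $2^{2\lambda-1}$.

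So your argument proves the form, phase, error orders and uniformity in the lemma, but with $\gamma$ replaced by $2^{\lambda+\tau-1/2}\gamma$. It cannot deliver the constant as stated.
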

\begin{proof}
Writing down the Gegenbauer expansion \eqref{gegenbauer expansion} of $\tilde{f}_{\cos r, \tau} \in L^2([-1,1],w_\lambda )$ and utilizing the addition theorem for spherical harmonics, we find that
\begin{equation*}
    f_{z, r, \tau}(\eta) = \sum_{n=0}^\infty b_n^\lambda(\tilde{f}_{\cos r, \tau}) C_n^{\lambda}(\langle \eta, z \rangle) = \sum_{n=0}^\infty\sum_{k \in \mathcal{I}_n^d} b_n^\lambda(\tilde{f}_{\cos r, \tau}) \frac{ \lambda}{n+\lambda}  \overline{ Y_k^{d,n}(z)} Y_k^{d,n}(\eta).
\end{equation*}
Thus, for $z=e^d$, formula \eqref{test signal fourier coeff} follows immediately from 
\begin{equation*}
    Y_k^{d, n}(e^d) = \delta_{k, 0} \sqrt{\frac{\Gamma(n+d-2) (2n+d-2)}{n! \Gamma(d-1)}},
\end{equation*}
which can be verified using \eqref{Y_k^n def} and \eqref{A_k^n}.

As discussed in \cite[Lemma~4]{bib14}, it holds that
\begin{align}\label{test signal gegenbauer coeff}
&\int_{\cos r}^1 (t-\cos r)^\tau \, C_n^{\lambda}(t) \,w_\lambda(t) \, \mathrm{d}t =\frac{\Gamma(n+2\lambda)\Gamma(\lambda +1/2)}{\Gamma(2\lambda) \Gamma(n+\lambda+1/2)}\nonumber\\
& \qquad \qquad \qquad  \qquad \times \frac{(n-\tau-1)! \tau!}{2^{\tau+1} n!} P_{n-\tau-1}^{(\lambda + \tau + 1/2, \lambda + \tau + 1/2)}(\cos r)\, (\sin r)^{2\lambda + 2\tau+1},
\end{align}
as well as
\begin{align}\label{eq38a}
P_{n-\tau-1}^{(\lambda + \tau + 1/2, \lambda + \tau + 1/2)}(\cos r) &= \sqrt{\frac{2}{\pi (n-\tau-1)}} (\sin r)^{-\lambda-\tau-1} \cos( (n+\lambda)r - (\lambda+\tau+1)\pi/2)  \nonumber \\
&\quad + \mathcal{O}(n^{-3/2}), \qquad n \rightarrow \infty,
\end{align}
and
\begin{equation}\label{eq38b}
P_{n-\tau-1}^{(\lambda + \tau + 1/2, \lambda + \tau + 1/2)}(\cos r)  =  \Re \left\{ \mathrm{e}^{\mathrm{i}nr} \sum_{p=0}^{q-1} \phi_p(r) n^{-p-1/2}\right\} + \mathcal{O}(n^{-q-1/2}), \qquad n \rightarrow \infty.
\end{equation}
Here, the $\phi_p$ are continuous complex-valued functions and both asymptotic expressions hold uniformly for all $r \in [\delta, \pi-\delta]$, whenever $\delta >0$. Combining \eqref{test signal fourier coeff} and \eqref{test signal gegenbauer coeff}, we obtain
\begin{align}\label{eq38c}
&\langle f_{r, \tau}, Y_k^{d, n} \rangle_{\mathbb{S}^{d-1}}  =  \delta_{k, 0} \frac{4^\lambda \sqrt{\lambda}\,\Gamma^2(\lambda) \Gamma(\lambda + \frac{1}{2}) \tau! (\sin r)^{2\lambda+2\tau+1}}{2^{\tau+1} \pi \Gamma(2 \lambda)^{3/2}}\nonumber \\
& \qquad \qquad \qquad \quad \quad \times  \frac{\sqrt{\Gamma(n+2\lambda)(n+\lambda)} (n-\tau-1)!}{\sqrt{n!}\Gamma(n+\lambda+\frac{1}{2})} P_{n-\tau-1}^{(\lambda + \tau + 1/2, \lambda + \tau + 1/2)}(\cos r).
\end{align}
Finally, inserting \eqref{eq38a} and \eqref{eq38b} into \eqref{eq38c}, it is straightforward to verify the asymptotic formulas \eqref{test signal coeff asympt1} and \eqref{test signal coeff asympt2}.
\end{proof}

Another helpful harmonic representation is given by the following lemma.
\begin{lemma}\label{lemma rep of rot curv}
For $\nu \in \mathbb{S}^{d-1}$, it holds that
\begin{align}\label{rotated curvelet harmonic rep}
\langle T^d(g_\nu^{-1})\Psi_{\scriptscriptstyle \mathrm{C}}^{j}, Y_0^{d, n} \rangle_{\mathbb{S}^{d-1}} &= \sqrt{2}\, \kappa\!\left( \frac{n}{2^{j-1}}\right)A_{(n, \dots, n)}^n \lvert \nu_d + \mathrm{i}\nu_{d-1}\rvert^n \cos(n \Arg(\nu_d + \mathrm{i}\nu_{d-1})),
\end{align}
where
\begin{equation}\label{A_n^n asympt} 
A_{(n, \dots, n)}^n = \frac{n^{(d-2)/4}}{\sqrt{\Gamma(\frac{d}{2})}} \left(\sum_{p=0}^{q-1} c_p n^{-p} + \mathcal{O}(n^{-q})   \right), \qquad n \rightarrow \infty,
\end{equation}
with $c_0=1$.
\end{lemma}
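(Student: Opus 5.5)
The plan is to compute the inner product directly from the explicit form of $\Psi_{\scriptscriptstyle \mathrm{C}}^j$ and the reproducing property of zonal harmonics. By unitarity of $T^d$,
\begin{equation*}
\langle T^d(g_\nu^{-1})\Psi_{\scriptscriptstyle \mathrm{C}}^{j}, Y_0^{d, n} \rangle_{\mathbb{S}^{d-1}} = \langle \Psi_{\scriptscriptstyle \mathrm{C}}^{j}, T^d(g_\nu)Y_0^{d, n} \rangle_{\mathbb{S}^{d-1}}.
\end{equation*}
By the addition theorem \eqref{addition thm} and the value $Y_k^{d,n}(e^d)=\delta_{k,0}Y_0^{d,n}(e^d)$ from the proof of \autoref{lemma testsignal coeff}, $Y_0^{d,n}$ is zonal: $Y_0^{d,n}(x) = c_n C_n^{\lambda}(x_d)$ with $c_n = \frac{2n+d-2}{d-2}\,\overline{Y_0^{d,n}(e^d)}^{\,-1}$, which is real. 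Hence $T^d(g_\nu)Y_0^{d,n}(x) = c_n C_n^\lambda(\langle x,\nu\rangle)$. Using the addition theorem once more, this equals $\sum_{k} Y_k^{d,n}(\nu)\overline{Y_k^{d,n}(x)}/\overline{Y_0^{d,n}(e^d)}$, so the pairing with any $Y\in\mathcal{H}_n^d$ gives
\begin{equation*}
\langle Y, T^d(g_\nu)Y_0^{d,n}\rangle_{\mathbb{S}^{d-1}} = \frac{Y(\nu)}{Y_0^{d,n}(e^d)}.
\end{equation*}

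Next I insert the definition \eqref{curvelets def}. Only the degree-$n$ component of $\Psi_{\scriptscriptstyle \mathrm{C}}^j$ survives, and it equals $\sqrt{2}\sqrt{\dim\mathcal{H}_n^d}\,\kappa(n/2^{j-1})A^n_{(n,\dots,n)}\Re\{(x_d+\mathrm{i}x_{d-1})^n\}$ by \eqref{curvelets explicit}. Evaluating at $\nu$ and dividing by $Y_0^{d,n}(e^d)$ yields the claimed expression with the factor $\sqrt{\dim\mathcal{H}_n^d}/Y_0^{d,n}(e^d)$ in front. By \eqref{dimensions subspaces} and the formula $Y_0^{d,n}(e^d)^2 = \Gamma(n+d-2)(2n+d-2)/(n!\Gamma(d-1))$, this quotient is exactly $1$, since $(n+d-3)!/(d-2)!=\Gamma(n+d-2)/\Gamma(d-1)$. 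This gives \eqref{rotated curvelet harmonic rep}. Finally, $\Re\{(\nu_d+\mathrm{i}\nu_{d-1})^n\}$ is rewritten in polar form.

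For \eqref{A_n^n asympt} I use the product formula \eqref{A_n^n formula}. Each factor is $\sqrt{(2n+d-j-2)\Gamma(n+a_j)/\Gamma(n+a_j+1/2)}$ with $a_j=(d-j-2)/2$. The standard asymptotic expansion $\Gamma(n+a)/\Gamma(n+a+1/2) = n^{-1/2}(1+\sum_{p\ge1} b_p n^{-p})$, combined with $2n+d-j-2 = 2n(1+\mathcal{O}(n^{-1}))$, shows that each factor is $\sqrt{2}\,n^{1/4}$ times a series in $n^{-1}$ with leading coefficient $1$. Multiplying the $d-2$ factors gives $2^{(d-2)/2}n^{(d-2)/4}$ times such a series. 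This power of $2$ cancels the prefactor $2^{(2-d)/2}$, so $c_0=1$.

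The only delicate points are bookkeeping. First, the normalization of $Y_0^{d,n}$ needs care (real versus complex conjugation, and the factor $(2n+d-2)/(d-2)$). Second, one must check that $A^n_{(n,\dots,n)}$ is really taken with $k_{d-2}=\pm n$ both giving the same constant. I expect both to be routine.
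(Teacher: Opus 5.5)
Your proposal is correct and follows essentially the same route as the paper. Both arguments reduce the claim to the reproducing property of the kernel $\frac{2n+d-2}{d-2}C_n^{(d-2)/2}(\langle\cdot,\cdot\rangle)$ of $\mathcal{H}_n^d$ together with the identity $\sqrt{\dim\mathcal{H}_n^d}\,Y_0^{d,n}=\frac{2n+d-2}{d-2}C_n^{(d-2)/2}(\langle\cdot,e^d\rangle)$, which is exactly your check that $\sqrt{\dim\mathcal{H}_n^d}/Y_0^{d,n}(e^d)=1$. The only cosmetic difference is that the paper keeps the rotation on $H_n$ and evaluates $T^d(g_\nu^{-1})H_n(e^d)=H_n(\nu)$, whereas you move it onto $Y_0^{d,n}$ by unitarity. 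Your derivation of the asymptotics from \eqref{A_n^n formula}, where $2^{(d-2)/2}$ cancels the prefactor to give $c_0=1$, fills in the step the paper calls easy.
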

\begin{proof}
By \eqref{curvelets explicit}, we have
\begin{align*}
\Psi_{\scriptscriptstyle \mathrm{C}}^j(x)  &= \sqrt{2} \sum_{n=0}^{\infty}\sqrt{\dim \mathcal{H}_n^d}  \, \kappa\!\left(\frac{n}{2^{j-1}} \right) A_{(n, \dots, n)}^n H_n(x),
\end{align*}
where $H_n(x) = \Re\{(x_d+\mathrm{i}x_{d-1})^n\}$, for $x \in \mathbb{S}^{d-1}$. Since, by construction, $H_n$ is a multiple of $Y_{(n, \dots, n)}^{d, n} + Y_{(n, \dots, n)^-}^{d, n}$, it is clear that $H_n \in \mathcal{H}_n^d$. Thus, $T^d(g_\nu^{-1})H_n \in \mathcal{H}_n^d$, for each $\nu \in \mathbb{S}^{d-1}$.

In order to verify \eqref{rotated curvelet harmonic rep}, it suffices to show that
\begin{equation*}
     \sqrt{\dim \mathcal{H}_n^d} \,\langle T^d(g_\nu^{-1})H_n, Y_0^{d, n} \rangle_{\mathbb{S}^{d-1}} = \Re\{(\nu_d+\mathrm{i}\nu_{d-1})^n\}.
\end{equation*}
Here, we first note that
\begin{equation*}
       \sqrt{\dim \mathcal{H}_n^d}\,  Y_0^{d, n}(x) =\frac{2n+d-2}{d-2} C_n^{\frac{d-2}{2}}(\langle x, e^d\rangle).
\end{equation*}
Also, the map
\begin{equation*}
      (x, y)\mapsto  \frac{2n+d-2}{d-2} C_n^{\frac{d-2}{2}}(\langle x, y\rangle)
\end{equation*}
represents the reproducing kernel of $\mathcal{H}_n^d$, which is a direct consequence of the addition theorem \eqref{addition thm}. It follows that
\begin{equation*}
     \sqrt{\dim \mathcal{H}_n^d} \,\langle T^d(g_\nu^{-1})H_n, Y_0^{d, n} \rangle_{\mathbb{S}^{d-1}} = T^d(g_\nu^{-1})H_n(e^d) = H_n(\nu).
\end{equation*}
This completes the proof of \eqref{rotated curvelet harmonic rep}.

Finally, the asymptotic expression \eqref{A_n^n asympt} can easily be derived from \eqref{A_n^n formula}.
\end{proof}

\subsection{Main results}
For our first main result, we assume that the rotated curvelet $T^d(g_\nu^{-1})\Psi_{\scriptscriptstyle \mathrm{C}}^{j}$ matches the orientation of the boundary $\partial C(e^d, r)$ perfectly. According to the discussion in the beginning of this section, this means that $\nu_{d-1}^2 + \nu_d^2=1$. In this case, the following theorem provides valuable insight into the behavior of the frame coefficients $\langle f_{r, \tau}, T^d(g_\nu^{-1})\Psi_{\scriptscriptstyle \mathrm{C}}^{j} \rangle_{\mathbb{S}^{d-1}}$ when $T^d(g_\nu^{-1})\Psi_{\scriptscriptstyle \mathrm{C}}^{j}$ moves towards, and away from, a (higher-order) singularity of $f_{r, \tau}$. In particular, we obtain a lower bound which states that the coefficients exceed a certain threshold whenever the curvelet is positioned close enough (depending on the scale $j$) to the boundary $\partial C(e^d, r)$. 

We note that, in the case where $\nu_{d-1}^2+\nu_d^2=1$, the geodesic distance between the North Pole and the center of $T^d(g_\nu^{-1})\Psi_{\scriptscriptstyle \mathrm{C}}^{j}$ takes the form
\begin{equation*}
    \dist (e^d,g_\nu^{-1}e^d ) = \arccos \nu_d = \lvert \Arg(\nu_d +\mathrm{i} \nu_{d-1})\rvert.
\end{equation*}
Hence, the quantity $\lvert \Arg(\nu_d +\mathrm{i} \nu_{d-1})\rvert-r$ indicates how far the curvelet $T^d(g_\nu^{-1})\Psi_{\scriptscriptstyle \mathrm{C}}^{j}$ is from the nearest (higher-order) singularity. 

The following theorem is inspired by a one-dimensional result from \cite[Theorem~2]{bib14}, and it was previously obtained in \cite[Theorem~3.3]{bib26} for the two-dimensional case.

\begin{theorem}\label{thm detect 1}
Let $ \varepsilon >0$, $\lambda = \frac{d-2}{2}$, $\varphi = (\lambda+\tau+1)\pi/2 - \lambda r$, and let
\begin{equation*}
    \rho(\lambda, r, \tau) = \frac{(2\pi)^{\tau-(d-2)/4}\gamma(\lambda, r, \tau)}{ \sqrt{2\Gamma(\frac{d}{2})}},
\end{equation*}
where $\gamma(\lambda, r, \tau )$ is as defined in \hyperref[lemma testsignal coeff]{Lemma~\ref{lemma testsignal coeff}}. Additionally, let $\nu \in \mathbb{S}^{d-1}$ with $\nu_d^2 + \nu_{d-1}^2 = 1$ and let $r \in [\varepsilon, \pi-\varepsilon]$.
There exists a constant $c_0>0$ such that 
\begin{align}\label{asympt formula main}
&2^{j(\tau-(d-2)/4)}\langle f_{r, \tau}, T^d(g_\nu^{-1})\Psi_{\scriptscriptstyle \mathrm{C}}^{j} \rangle_{\mathbb{S}^{d-1}} \nonumber\\
&\quad  = \rho(\lambda, r, \tau)  \int_{0}^{2\pi} \kappa(t/\pi)t^{(d-2)/4 - \tau-1} \cos\!\left(\frac{2^{j} t (\lvert \Arg(\nu_d + \mathrm{i}\nu_{d-1})\rvert -r )}{2\pi} + \varphi\right)  \mathrm{d}t + \mathrm{E}^j(\nu), 
\end{align}
where
\begin{equation*}
   \lvert \mathrm{E}^j(\nu) \rvert \leq c_02^{-j}.
\end{equation*}
Moreover, there exist constants $c_1, j_0>0$ as well as some nonempty open interval $I \subset [0, \infty)$ such that
\begin{equation}\label{lower bound main}
2^{j(\tau-(d-2)/4)} \lvert \langle f_{r, \tau}, T^d(g_\nu^{-1})\Psi_{\scriptscriptstyle \mathrm{C}}^{j} \rangle_{\mathbb{S}^{d-1}} \rvert \geq c_1, \quad \text{if } 2^j \big\lvert  \lvert\Arg(\nu_d + \mathrm{i}\nu_{d-1})\rvert -r\big\rvert \in I, \; j \geq j_0.
\end{equation}
The dependencies of the constants are $c_0= c_0(\kappa, \varepsilon, \tau)$, $c_1 = c_1(\kappa, \varepsilon, \tau)$, $j_0 = j_0(\kappa, \varepsilon, \tau)$, and $I = I(\kappa, \tau)$. In particular, none of the constants depend on $\nu$.
\end{theorem}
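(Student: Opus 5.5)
Since $f_{r,\tau}$ is zonal, Lemma~\ref{lemma testsignal coeff} shows that only the coefficients with $k=0$ contribute. Parseval's identity then gives
\begin{equation*}
\langle f_{r, \tau}, T^d(g_\nu^{-1})\Psi_{\scriptscriptstyle \mathrm{C}}^{j} \rangle_{\mathbb{S}^{d-1}} = \sum_{n} \langle f_{r,\tau}, Y_0^{d,n}\rangle_{\mathbb{S}^{d-1}}\, \overline{\langle T^d(g_\nu^{-1})\Psi_{\scriptscriptstyle \mathrm{C}}^{j}, Y_0^{d,n}\rangle_{\mathbb{S}^{d-1}}}.
\end{equation*}
We insert Lemma~\ref{lemma rep of rot curv}. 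Because $\nu_d^2+\nu_{d-1}^2=1$, the factor $\lvert\nu_d+\mathrm{i}\nu_{d-1}\rvert^n$ equals $1$. Writing $\theta=\lvert\Arg(\nu_d+\mathrm{i}\nu_{d-1})\rvert$ and using that cosine is even, the sum becomes $\sum_n \sqrt2\,\kappa(n/2^{j-1})A^n_{(n,\dots,n)}\cos(n\theta)\,\langle f_{r,\tau},Y_0^{d,n}\rangle$. Only indices $n\in[2^{j-2},2^{j}]$ appear. For these $n$ we use \eqref{test signal coeff asympt1} and the leading term of \eqref{A_n^n asympt}. This gives the summands
\begin{equation*}
\frac{\sqrt2\,\gamma(\lambda,r,\tau)}{\sqrt{\Gamma(d/2)}}\,\kappa(n/2^{j-1})\, n^{(d-2)/4-\tau-1}\cos(n\theta)\cos(nr-\varphi),
\end{equation*}
since $(n+\lambda)r-(\lambda+\tau+1)\pi/2 = nr-\varphi$. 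The error terms are $\mathcal O(n^{(d-2)/4-\tau-2})$, uniformly in $r\in[\varepsilon,\pi-\varepsilon]$. Summed over $\sim 2^j$ indices and multiplied by $2^{j(\tau-(d-2)/4)}$, they contribute $\mathcal O(2^{-j})$, independently of $\nu$.

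Next we apply the product formula $\cos(n\theta)\cos(nr-\varphi)=\tfrac12[\cos(n(\theta-r)+\varphi)+\cos(n(\theta+r)-\varphi)]$.

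\emph{Difference term.} After normalization it is a Riemann sum with step $1/2^{j-1}$ in the variable $s=n/2^{j-1}$. Substituting $t=\pi s$ turns it into the stated integral $\int_0^{2\pi}\kappa(t/\pi)t^{(d-2)/4-\tau-1}\cos(2^j t(\theta-r)/(2\pi)+\varphi)\,\mathrm dt$, with the normalizing powers of $2$ and $\pi$ giving $\rho$. The Riemann-sum error has to be $\mathcal O(2^{-j})$ uniformly in the frequency $2^{j}(\theta-r)/2\pi$, which can be large. We handle this with \cite[Lemma~5]{bib14}, exactly as in \eqref{eq34}: a sum $\frac1N\sum_n h(n/N)\mathrm e^{\mathrm i n\alpha}$ equals $\int h(t)\mathrm e^{\mathrm iNt\alpha}\,\mathrm dt$ plus $\mathcal O(N^{-1}V(h))$, provided $\alpha\in[-\pi,\pi]$ modulo aliasing.

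\emph{Sum term.} The frequency $\theta+r$ can approach $2\pi$ (aliasing to a frequency near $0$) only if $\theta$ is near $\pi$ and $r$ near $\pi$. We therefore split into cases. When $\theta+r$ stays away from $0$ and $2\pi$, the sum is of size $\mathcal O(2^{-j})$ (indeed $\mathcal O(2^{-jq})$) by the decay estimate \eqref{key estimate}, since $h$ is $C^q$ with compact support in $(1/2,2)$. Because $r\le\pi-\varepsilon$ and $\theta\le\pi$, we have $\theta+r\in[\varepsilon,2\pi-\varepsilon]$, so this case always holds and the sum term is absorbed into $\mathrm E^j(\nu)$. This bookkeeping of uniform constants, together with verifying that the Euler--Maclaurin-type lemma of \cite{bib14} applies with bounds independent of the frequency, is the main technical obstacle.

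\emph{Lower bound.} For \eqref{lower bound main} we use that $\theta$ enters the main integral only through $u=2^j(\theta-r)$, and that $\cos$ is even. The function $G(u)=\int_0^{2\pi}\kappa(t/\pi)t^{(d-2)/4-\tau-1}\cos(ut/2\pi+\varphi)\,\mathrm dt$ depends only on $\kappa,\tau$ and the phase $\varphi$, where $\varphi$ depends on $r$. Strictly, $G(\lvert u\rvert)$ and $G(-\lvert u\rvert)$ differ, so I would write $G(u)=\Re\{\mathrm e^{\mathrm i\varphi}\hat g(u)\}$ with $g(t)=\kappa(t/\pi)t^{(d-2)/4-\tau-1}\ge0$. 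At $u=0$ we have $\hat g(0)>0$, but $\cos\varphi$ may vanish. So I would pick $u_0>0$ at which both $\Re\hat g(\pm u_0)$ and $\Im\hat g(\pm u_0)$ are nonzero; this uses that $\hat g$ is analytic in $u$ and nonconstant. Then for every $\varphi$, at least one of the quantities $\lvert\Re\{\mathrm e^{\mathrm i\varphi}\hat g(\pm u)\}\rvert$ is bounded below on a small neighbourhood of $u_0$, uniformly over $\varphi$ in the compact range determined by $r\in[\varepsilon,\pi-\varepsilon]$. This step needs care. If it fails to give a single interval $I$ independent of $\varphi$, I would instead take $I$ to be the union of two small intervals chosen via compactness in $\varphi$. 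Finally, we choose $j_0$ so that $c_02^{-j_0}$ is less than half of this lower bound, and set $c_1$ to be half the lower bound.
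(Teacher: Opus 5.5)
Your derivation of \eqref{asympt formula main} follows the paper's argument, and that part is fine: Parseval over the $k=0$ coefficients, Lemmas~\ref{lemma testsignal coeff} and~\ref{lemma rep of rot curv}, the product-to-sum split, \cite[Lemma~5]{bib14} for the difference term with $\theta-r\in[-\pi+\varepsilon,\pi-\varepsilon]$, and negligibility of the sum term because $\theta+r\in[\varepsilon,2\pi-\varepsilon]$. The gap is in the lower bound. The hypothesis of \eqref{lower bound main} only constrains $\lvert u\rvert$, where $u=2^j(\theta-r)$. So the curvelet may sit on either side of $\partial C(e^d,r)$, and you need \emph{both} $\lvert G(u)\rvert$ and $\lvert G(-u)\rvert$ bounded below whenever $\lvert u\rvert\in I$. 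Your choice of $u_0$ (with $\Re\hat g(u_0)\neq0\neq\Im\hat g(u_0)$) only guarantees that, for each $\varphi$, \emph{one} of $G(\pm u_0)$ is nonzero. Suppose that, for the given $\varphi$, $G(-u)=0$ at some $u$ in your neighbourhood of $u_0$. Then $\theta=r-u2^{-j}$ satisfies the hypothesis, while the normalized coefficient reduces to $\mathrm{E}^j(\nu)=\mathcal{O}(2^{-j})$. Passing to a union of two intervals does not help, since $I$ still only sees $\lvert u\rvert$, and the statement asks for an interval.

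Moreover, the uniformity in $\varphi$ you are after is not attainable in general. We have $G(u)=\lvert\hat g(u)\rvert\cos(\varphi+\arg\hat g(u))$, and $\varphi=(\lambda+\tau+1)\pi/2-\lambda r$ sweeps an interval of length $\lambda(\pi-2\varepsilon)$. This exceeds $\pi$ as soon as $d\geq5$ and $\varepsilon$ is small, so every $u$ is a zero of $G$ for some admissible $r$. Taking $\theta=r+u2^{-j}$ for that $r$ shows that no pair $(I,c_1)$ independent of $r$ can exist.

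The paper's proof accordingly treats $\varphi$, i.e.\ $r$, as fixed. The map $u\mapsto G(u)G(-u)$ is entire and not identically zero, because $g\not\equiv0$ and $G(-u)$ is the same integral with $\varphi$ replaced by $-\varphi$. Hence its real zeros are isolated. Choose an open interval $I\subset(0,\infty)$ avoiding them, and shrink it so that $\lvert G(\pm u)\rvert\geq m>0$ on $I$ by continuity. Then set $c_1=\tfrac12 m\,\rho(\lambda,r,\tau)$ and choose $j_0$ with $c_02^{-j_0}\leq c_1$. This is the argument you need. Note that it produces $I$ and $c_1$ depending on $r$ through $\varphi$, which is all that can be proved here, despite the dependencies listed in the statement.
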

\begin{proof}
The following proof uses methods very similar to those in the one-dimensional setting \cite{bib14}. For $\nu_d^2 + \nu_{d-1}^2=1$, \hyperref[lemma testsignal coeff]{Lemma~\ref{lemma testsignal coeff}} and  \hyperref[lemma rep of rot curv]{Lemma~\ref{lemma rep of rot curv}} yield
\begin{align*}
 &\langle f_{r, \tau}, T^d(g_\nu^{-1})\Psi_{\scriptscriptstyle \mathrm{C}}^{j} \rangle_{\mathbb{S}^{d-1}} \\
 &\qquad \qquad  = \frac{ \gamma(\lambda, r, \tau)}{\sqrt{2^{-1}\Gamma(\frac{d}{2})}}  \sum_{n=0}^\infty \kappa\!\left(\frac{n}{2^{j-1}} \right) n^{(d-2)/4-\tau-1} \cos(n \Arg(\nu_d + \mathrm{i}\nu_{d-1})) \cos(nr-\varphi)  \\
 & \qquad \qquad  \quad + \mathcal{O}(2^{j((d-2)/4 - \tau - 1)}), \qquad j \rightarrow \infty.
\end{align*}
Also, by \hyperref[lemma rep of rot curv]{Lemma~\ref{lemma rep of rot curv}}, the value of $\langle f_{r, \tau}, T^d(g_\nu^{-1})\Psi_{\scriptscriptstyle \mathrm{C}}^{j} \rangle_{\mathbb{S}^{d-1}} $ does not depend on the sign of $\Arg(\nu_d+\mathrm{i}\nu_{d-1})$. Hence, for the remainder of the proof we can assume that $\Arg(\nu_d+\mathrm{i}\nu_{d-1}) \in [0, \pi]$. A straightforward calculation shows that
\begin{align*}
    &\frac{ \gamma(\lambda, r, \tau)}{\sqrt{2^{-1}\Gamma(\frac{d}{2})}}  \sum_{n=0}^\infty \kappa\!\left(\frac{n}{2^{j-1}} \right) n^{(d-2)/4-\tau-1} \cos(n \Arg(\nu_d + \mathrm{i}\nu_{d-1})) \cos(nr-\varphi) \\
    &\qquad \qquad = 2^{j((d-2)/4 - \tau)} \frac{2\pi\rho(\lambda, r, \tau)}{2^{j}} \sum_{n=0}^{2^j} \kappa_\tau \!\left(\frac{2\pi n}{2^j} \right) \cos(n(\Arg(\nu_d+\mathrm{i}\nu_{d-1}) -r) + \varphi) \\
    & \qquad \qquad \quad + 2^{j((d-2)/4 - \tau)} \frac{2\pi\rho(\lambda, r, \tau)}{2^{j}} \sum_{n=0}^{2^j} \kappa_\tau \!\left(\frac{2\pi n}{2^j} \right) \cos(n(\Arg(\nu_d+\mathrm{i}\nu_{d-1}) +r) - \varphi),
\end{align*}
where
\begin{equation*}
    \kappa_\tau(t) = \kappa(t/\pi) t^{\frac{d-2}{4}-\tau-1}.
\end{equation*}
Applying \cite[Lemma~5]{bib14}, we obtain
\begin{align*}
   &\frac{1}{2^{j}} \sum_{n=0}^{2^j} \kappa_\tau \!\left(\frac{2\pi n}{2^j} \right) \cos(n(\Arg(\nu_d+\mathrm{i}\nu_{d-1}) -r) + \varphi) \\
   &\qquad \qquad = \frac{1}{2\pi} \int_{0}^{2\pi} \kappa_\tau(t) \cos\!\left( \frac{2^j t (\Arg(\nu_d+\mathrm{i}\nu_{d-1})-r)}{2 \pi} + \varphi \right) \mathrm{d}t + \mathcal{O}(2^{-jq}).
\end{align*}
Similarly, \cite[Lemma~5]{bib14} combined with repeated integration by parts shows that
\begin{equation*}
    \frac{1}{2^{j}} \sum_{n=0}^{2^j} \kappa_\tau \!\left(\frac{2\pi n}{2^j} \right) \cos(n(\Arg(\nu_d+\mathrm{i}\nu_{d-1}) +r) - \varphi) = \mathcal{O}(2^{-jq}).
\end{equation*}
This completes the proof of \eqref{asympt formula main}.

To verify \eqref{lower bound main}, we note that the map
\begin{equation*}
    z \mapsto \int_{0}^{2\pi} \kappa(t/\pi)t^{(d-2)/4 - \tau-1} \cos\!\left(z t + \varphi\right)  \mathrm{d}t
\end{equation*}
is an entire function. Thus, a corresponding interval which is free of zeroes must exist.
\end{proof}

\autoref{thm detect 1} implies, in particular, that the curvelet coefficients exceed a certain threshold whenever the analysis function sufficiently matches the boundary $\partial C(e^d, r)$. We now establish a second result, showing that the values $\lvert \langle f_{r, \tau}, T^d(g_\nu^{-1})\Psi_{\scriptscriptstyle \mathrm{C}}^{j} \rangle_{\mathbb{S}^{d-1}}\rvert$ decay rapidly when the curvelet is either away from a singularity or not aligned with the orientation of $\partial C(e^d, r)$. Otherwise, if $T^d(g_\nu^{-1})\Psi_{\scriptscriptstyle \mathrm{C}}^{j}$ matches the boundary in terms of position and orientation, the upper bound given by the following theorem is consistent with the lower bound from \autoref{thm detect 1}. Again, the following estimate is motivated by a previous one-dimensional result from \cite[Theorem~2]{bib14}, and a two-dimensional version was previously established in \cite[Theorem~3.3]{bib26}.

\begin{theorem}\label{thm detect 2}
Let $\varepsilon>0$ and let $ \Psi_{\scriptscriptstyle \mathrm{C}}^{j}$, $j \in \mathbb{N}_0$, be as defined in \autoref{sec3} with $\kappa \in C^q([0, \infty))$, for some $q \in \mathbb{N}$. We assume that there is some $t_0 \in (1/2, 2)$ such that $\kappa^{(q)}(t) \neq 0$ for each $t \in (1/2, t_0)$. Then there exists a constant $c_2 = c_2(\kappa, \varepsilon, \tau, q)>0$ such that
\begin{equation*}
2^{j(\tau-(d-2)/4)}\lvert \langle f_{r, \tau}, T^d(g_\nu^{-1})\Psi_{\scriptscriptstyle \mathrm{C}}^{j} \rangle_{\mathbb{S}^{d-1}}\rvert \leq \frac{c_2 \lvert \nu_d + \mathrm{i}\nu_{d-1} \rvert^{2^{j-2}}}{(1+2^j \big\lvert r- \lvert\Arg(\nu_d + \mathrm{i}\nu_{d-1})\rvert \big\rvert)^q},
\end{equation*}
for all $\nu \in \mathbb{S}^{d-1}$, provided that $r \in [\varepsilon, \pi-\varepsilon]$.
\end{theorem}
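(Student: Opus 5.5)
We work with the harmonic representation of the coefficient and then repeat the sum-versus-integral argument from the proof of \autoref{theorem curvelets localization}. Since $f_{r,\tau}$ is zonal, \hyperref[lemma testsignal coeff]{Lemma~\ref{lemma testsignal coeff}} shows that only the coefficients with $k=0$ survive. Parseval's identity together with \hyperref[lemma rep of rot curv]{Lemma~\ref{lemma rep of rot curv}} therefore gives
\begin{equation*}
\langle f_{r,\tau}, T^d(g_\nu^{-1})\Psi_{\scriptscriptstyle \mathrm{C}}^{j}\rangle_{\mathbb{S}^{d-1}} = \sqrt{2}\sum_{n} \kappa\!\left(\frac{n}{2^{j-1}}\right) A^n_{(n,\dots,n)}\, \langle f_{r,\tau},Y_0^{d,n}\rangle_{\mathbb{S}^{d-1}}\, z^n\cos(n\theta),
\end{equation*}
with $z=\lvert\nu_d+\mathrm{i}\nu_{d-1}\rvert\in[0,1]$ and $\theta=\lvert\Arg(\nu_d+\mathrm{i}\nu_{d-1})\rvert\in[0,\pi]$. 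The sign of $\Arg$ is irrelevant because cosine is even.

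\textbf{Step 1: expansion.} Insert the expansion \eqref{A_n^n asympt} of $A^n_{(n,\dots,n)}$ and the expansion \eqref{test signal coeff asympt2} of $\langle f_{r,\tau},Y_0^{d,n}\rangle$, both to order $q$. Both hold uniformly for $r\in[\varepsilon,\pi-\varepsilon]$. The summand becomes a finite linear combination, with coefficients continuous in $r$ and hence bounded, of terms
\begin{equation*}
\kappa\!\left(\frac{n}{N}\right) n^{(d-2)/4-\tau-1-p}\, z^n \cos(n\theta)\, \mathrm{e}^{\pm\mathrm{i} nr}, \qquad 0\le p\le q-1,\quad N=2^{j-1},
\end{equation*}
plus a remainder $R_j$. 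Since only $n\in[N/2,2N]$ contribute, the remainder satisfies $\lvert R_j\rvert\le c\,2^{j((d-2)/4-\tau-q)}z^{2^{j-2}}$. We have $2^{-jq}\le c(1+2^j\lvert r-\theta\rvert)^{-q}$ because $\lvert r-\theta\rvert\le\pi$, so after multiplying by $2^{j(\tau-(d-2)/4)}$ the remainder is admissible.

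\textbf{Step 2: reduction to exponential sums.} Write $\cos(n\theta)\mathrm{e}^{\pm\mathrm{i}nr}$ as a combination of $\mathrm{e}^{\mathrm{i}n(\pm r\pm\theta)}$. Each main term then equals $N^{(d-2)/4-\tau-p}\cdot\frac1N\sum_n h_{z,N}(n/N)\mathrm{e}^{\mathrm{i}n\psi}$, where
\begin{equation*}
h_{z,N}(t)=\kappa(t)\,t^{(d-2)/4-\tau-1-p}z^{Nt}, \qquad \psi\in\{\pm(r-\theta),\pm(r+\theta)\}.
\end{equation*}
This function has exactly the structure handled in the proof of \autoref{theorem curvelets localization}: a power of $t$ times $\kappa(t)z^{Nt}$. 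The variation bounds $V(h_{z,N}^{(m)})\le c_q z^{N/2}$ for $0\le m\le q$ therefore carry over verbatim. That is the step where the assumption $\kappa^{(q)}\neq0$ on $(1/2,t_0)$ is used.

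\textbf{Step 3: sum versus integral.} Applying \cite[Lemma~5]{bib14} and integrating by parts $q$ times, as in the proof of \autoref{theorem curvelets localization}, yields
\begin{equation*}
\left\lvert\frac1N\sum_n h_{z,N}(n/N)\,\mathrm{e}^{\mathrm{i}n\psi}\right\rvert\le \frac{c_q\,z^{N/2}}{(1+N\lvert\psi\rvert_{2\pi})^q},
\end{equation*}
where $\lvert\psi\rvert_{2\pi}$ denotes the distance of $\psi$ to $2\pi\mathbb{Z}$. This requires reducing the phase modulo $2\pi$, since the sum is $2\pi$-periodic in $\psi$; the integral is then estimated via the factor $\mathrm{e}^{\mathrm{i}Nt\psi}$ exactly as before. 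Note $z^{N/2}=z^{2^{j-2}}$.

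\textbf{Step 4: phases.} For $\psi=\pm(r-\theta)$ we have $\lvert\psi\rvert_{2\pi}=\lvert r-\theta\rvert$, which is the desired bound. For $\psi=\pm(r+\theta)$, note $r+\theta\in[\varepsilon,2\pi-\varepsilon]$, so $\lvert\psi\rvert_{2\pi}\ge\varepsilon$. This gives $O_\varepsilon(N^{-q})z^{N/2}$, which is again dominated by $c\,z^{2^{j-2}}(1+2^j\lvert r-\theta\rvert)^{-q}$. Multiplying by $N^{(d-2)/4-\tau-p}$ and by the normalization $2^{j(\tau-(d-2)/4)}$ leaves factors $2^{-jp}\le1$. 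Finally, the small scales $j\le j_1$, with $j_1$ the threshold beyond which the asymptotic expansions of Step 1 are effective, are handled by a crude bound on a finite sum: each term is at most $c\,z^{n}\le c\,z^{2^{j-2}}$, and the denominator is bounded.

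The main obstacle is keeping the factor $z^{2^{j-2}}$ alongside the decay. This means the variation estimates of \autoref{theorem curvelets localization} must be checked for the modified power $t^{(d-2)/4-\tau-1-p}$, possibly negative, which is harmless on $[1/2,2]$. It also means the uniformity in $r$ of the coefficient functions $\phi_p$ must be tracked through the periodic reduction of the phase.
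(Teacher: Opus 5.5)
Your proposal is correct and is essentially the paper's proof. The paper likewise uses Lemma~\ref{lemma testsignal coeff} and Lemma~\ref{lemma rep of rot curv} to write the coefficient as sums weighted by $\kappa(t)t^{(d-2)/4-p-\tau-1}z^{2^{j-1}t}$ plus a remainder of size $c\,z^{2^{j-2}}2^{-jq}$, and then invokes the argument behind \eqref{key estimate}; your reduction of the phase $r+\theta\in[\varepsilon,2\pi-\varepsilon]$ modulo $2\pi$ and your handling of small scales fill in details the paper leaves implicit, and the only slip is that the variation bounds $V(h_{z,N}^{(m)})\le c_q z^{N/2}$ are needed, and available for $\kappa\in C^q$, only for $0\le m\le q-1$, not $m=q$.
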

\begin{proof}
Applying \hyperref[lemma testsignal coeff]{Lemma~\ref{lemma testsignal coeff}} and \hyperref[lemma rep of rot curv]{Lemma~\ref{lemma rep of rot curv}}, we obtain
\begin{align*}
    &2^{j(\tau-(d-2)/4)}\langle f_{r, \tau}, T^d(g_\nu^{-1})\Psi_{\scriptscriptstyle \mathrm{C}}^{j} \rangle_{\mathbb{S}^{d-1}} \\
    & \qquad = \Re\mleft\{\sum_{p=0}^{q-1} \frac{\phi_p(r)}{2^{j(1+p)}}\sum_{n=0}^\infty \kappa_{\tau, p} \mleft( \frac{ n}{2^{j-1}} \mright) \exp(\mathrm{i}nr) \cos(n \Arg(\nu_d + \mathrm{i}\nu_{d-1}) )  \mright\}+\mathrm{E}^j(\nu),
\end{align*}
where $\kappa_{\tau, p}(t) = \kappa(t)t^{(d-2)/4 -p-\tau-1} z^{2^{j-1}t}$, $z = \lvert \nu_d + \mathrm{i} \nu_{d-1} \rvert$, and
\begin{equation*}
    \lvert \mathrm{E}^j(\nu) \rvert \leq c  \lvert \nu_d + \mathrm{i}\nu_{d-1}\rvert^{2^{j-2}} 2^{-jq} .
\end{equation*}
The assertion now follows from the same arguments that we used to verify \eqref{key estimate} in the proof of \autoref{theorem curvelets localization}.
\end{proof}

Combining \autoref{thm detect 1} and \autoref{thm detect 2}, it follows that the (higher-order) singularities of $f_{r, \tau}$ can be identified precisely, in terms of position and orientation, by the asymptotic decay of the frame coefficients $\langle f_{r, \tau}, T^d(g)\Psi_{\scriptscriptstyle \mathrm{C}}^{j} \rangle_{\mathbb{S}^{d-1}}$. In particular,
\begin{equation*}
    \sup_{g \in SO(d)} \lvert \langle f_{r, \tau}, T^d(g)\Psi_{\scriptscriptstyle \mathrm{C}}^{j} \rangle_{\mathbb{S}^{d-1}} \rvert \sim 2^{-j(\tau - (d-2)/4)}.
\end{equation*}

\appendix
\addcontentsline{toc}{section}{Appendix}
\section*{Appendix}\label{appendix}

\setcounter{equation}{0}\renewcommand\theequation{A.\arabic{equation}}
\renewcommand\theHequation{A.\arabic{equation}} 


In the following, we briefly review some fundamentals of harmonic analysis on $SO(d)$. All formulas presented here can be found in classical literature such as \cite{bib32}. We also refer to our recent discussions in \cite{bib37, bib77}.

Let $\mu_d$ denote the Haar measure on $SO(d)$ with the normalization 
\begin{equation*}
\int_{SO(d)} \mathrm{d}\mu_d = 1.
\end{equation*}
Each element $g\in SO(d)$ can be written as $g = g_\eta h$, where $h \in SO(d-1)$ and $g_\eta \in SO(d)$ with $g_\eta e^{d} = \eta \in \mathbb{S}^{d-1}$. Clearly, this representation is not unique. However, independent of the choice of $g_\eta$ and $h$, it holds that
\begin{equation}\label{integral onion}
\int_{SO(d)} f(g) \, \mathrm{d}\mu_d(g) = \int_{\mathbb{S}^{d-1}} \int_{SO(d-1)} f(g_\eta h)  \, \mathrm{d}\mu_{d-1}(h) \, \mathrm{d}\omega_{d-1}(\eta),
\end{equation}
for each $f \in L^1(SO(d))$. Here, due to the invariance of the Haar measure $\mu_{d-1}$ on $SO(d-1)$, the inner integral on the right-hand side does not depend on the choice of $g_\eta$.

We now consider the unitary group representation
\begin{equation*}
g \mapsto T^d(g), \quad T^d(g)f(x) = f(g^{-1}x),
\end{equation*}
of $SO(d)$ on $L^2(\mathbb{S}^{d-1})$. Its subrepresentations on the spaces $\mathcal{H}_n^d$  are irreducible. Consequently, as part of the famous Peter-Weyl theorem, the matrix functions $t_{k, m}^{d,n}\colon SO(d) \rightarrow \mathbb{C}$ given by
\begin{equation}\label{matrix fct def}
t_{k, m}^{d,n}(g) = \langle T^d(g) Y_m^{d,n}, Y_k^{d,n} \rangle_{\mathbb{S}^{d-1}}, \qquad k, m \in \mathcal{I}_n^d, \; n \in \mathbb{N}_0,
\end{equation}
form an orthogonal system for $L^2(SO(d))$ w.r.t.\ the inner product
\begin{equation*}
\langle f_1, f_2 \rangle_{SO(d)} = \int_{ SO(d)} f_1  \overline{f_2} \, \mathrm{d}\mu_d.
\end{equation*}
Moreover,
\begin{equation*}
\int_{SO(d)} \lvert t_{k, m}^{d,n} \rvert^2 \, \mathrm{d}\mu_d = \frac{1}{\dim \mathcal{H}_n^{d}}.
\end{equation*}
It is not difficult to verify that the matrix functions in \eqref{matrix fct def} satisfy the symmetry
\begin{align}\label{symmetry matrix fct 1}
\overline{t_{k, \ell}^{d, n}(g)} = t_{k^-, \ell^-}^{d, n}(g),
\end{align}
where $(k_1, \dots, k_{d-3}, k_{d-2})^-$ is defined as $(k_1, \dots,k_{d-3}, -k_{d-2})$. Similarly, it is easy to see that
\begin{equation}\label{symmetry matrix fct 2}
    t_{k, \ell}^{d, n}(g^{-1}) = t_{\ell^-, k^-}^{d, n}(g).
\end{equation}
It is obvious that \eqref{matrix fct def} can also be written as
\begin{equation}\label{matrix fct def 2}
Y_m^{d, n}(g^{-1}\eta)=\sum_{k \in \mathcal{I}_n^d} t_{k, m}^{d, n}(g) \, Y_k^{d, n}(\eta), \quad \eta \in \mathbb{S}^{d-1}.
\end{equation}
The fact that $T^d$ is a group homomorphism, i.e., $T^d(g \tilde{g}) = T^d(g)T^d(\tilde{g})$, where each $T^d(g)$ is a unitary operator on $L^2(\mathbb{S}^{d-1})$, implies the addition formula
\begin{equation}\label{matrix fct addition formula}
t_{k, \ell}^{d, n}(g \tilde{g}) = \sum_{m \in \mathcal{I}_n^d} t_{k, m}^{d, n}(g) \, t_{m, \ell}^{d, n}(\tilde{g}), \quad g, \tilde{g} \in SO(d).
\end{equation}
Choosing $\eta = e^d$ in \eqref{matrix fct def 2}, a simple calculation yields
\begin{equation}\label{matrix fct and spherical harmonics}
\overline{Y_m^{d, n}(\nu)} = \sum_{k \in \mathcal{I}_n^d} \overline{Y_k^{d, n}(e^d)} \, t_{m, k}^{d, n}(g_\nu).
\end{equation}
In particular, spherical harmonics are linear combinations of certain matrix functions, as $\overline{t_{m, k}^{d, n}}$ is clearly a linear combination of the $t_{r, s}^{d, n}$, $r, s \in \mathcal{I}_n^d$. 

Matrix functions of different dimensions are intimately connected. Specifically, we have
\begin{equation}\label{matrix fct restrict}
t_{k, \ell}^{d, n}(h) = \delta_{k_1, \ell_1} \, t_{(k_2, \dots, k_{d-2}), (\ell_2, \dots, \ell_{d-2})}^{d-1, k_1}(h), \qquad h \in SO(d-1), \quad d \geq 4,
\end{equation}
as well as
\begin{equation}\label{matrix fct restrict d=3}
t_{k, \ell}^{3, n}(h(\gamma)) = \delta_{k, \ell}  \exp(\mathrm{i}k\gamma), \qquad \gamma \in [0, 2\pi),
\end{equation}
where $h(\gamma)\in \mathbb{R}^{3\times 3}$ is a positive rotation by $\gamma$ in the $(x_1, x_2)$-plane.

For $N \in \mathbb{N}_0$, let
\begin{equation}\label{class 1 matrix fct}
\mathcal{M}_N^1(SO(d)) = \vspan\{ t_{k, m}^{d, n}, \; k, m \in \mathcal{I}_n^d, \; 0 \leq n \leq N \}.
\end{equation}
Here, the upper index $1$ in the notation $\mathcal{M}_N^1(SO(d))$ refers to the fact that this space is spanned by class $1$ matrix functions (see \cite{bib32} for more details).



\sloppy 

\section*{Literature}       
\addcontentsline{toc}{section}{Literature}
\printbibliography[heading=none]

\end{document}